\documentclass[10pt,english,leqno]{article}
\usepackage[T1]{fontenc}
\usepackage{times}
\usepackage{url}
\usepackage[latin9]{inputenc}
\usepackage{amsthm}
\usepackage{amsmath}
\usepackage{amssymb}
\usepackage{enumerate}
\usepackage{todonotes}
\usepackage[numbers]{natbib}
\usepackage{hyperref}

\makeatletter
\newtheorem{thm}{Theorem}
\newtheorem{lemma}[thm]{Lemma}

\theoremstyle{definition}

\newtheorem{conj}[thm]{Conjecture}

\newsavebox{\fmbox}

\newlength{\dyindent}
\setlength{\dyindent}{1in}
{\end{list}}

\newenvironment{dy*}{\refstepcounter{equation}\begin{list}{}%
{\setlength{\leftmargin}{\dyindent}\setlength{\labelwidth}{\dyindent}%
\addtolength{\labelwidth}{-\labelsep}}%
\item}%
{\end{list}}



\newcommand{\nullity}{\mbox{nullity}}



\title{A graph minors characterization of signed graphs whose signed Colin de Verdi\`ere parameter $\nu$ is two}

\author{Marina Arav, Frank J. Hall, Zhongshan Li, Hein van der Holst\footnote{Corresponding author, E-mail: hvanderholst@gsu.edu} \\
Department of Mathematics and Statistics \\
Georgia State University \\
Atlanta, GA 30303, USA
}

\date{}
\begin{document}

\maketitle

\begin{abstract}
A signed graph is a pair $(G,\Sigma)$, where $G=(V,E)$ is a graph (in which parallel edges are permitted, but loops are not) with $V=\{1,\ldots,n\}$ and $\Sigma\subseteq E$. The edges in $\Sigma$ are called odd and the other edges even. By $S(G,\Sigma)$ we denote the set of all symmetric $n\times n$ matrices $A=[a_{i,j}]$ with $a_{i,j}<0$ if $i$ and $j$ are connected by only even edges, $a_{i,j}>0$ if $i$ and $j$ are connected by only odd edges, $a_{i,j}\in \mathbb{R}$ if $i$ and $j$ are connected by both even and odd edges, $a_{i,j}=0$ if $i\not=j$ and $i$ and $j$ are non-adjacent, and $a_{i,i} \in \mathbb{R}$ for all vertices $i$. The parameter $\nu(G,\Sigma)$ of a signed graph $(G,\Sigma)$ is the largest nullity of any positive semidefinite matrix $A\in S(G,\Sigma)$ that has the Strong Arnold Property. By $K_3^=$ we denote the signed graph obtained from $(K_3,\emptyset)$ by adding to each even edge an odd edge in parallel. In this paper, we prove that a signed graph $(G,\Sigma)$ has $\nu(G,\Sigma)\leq 2$ if and only if $(G,\Sigma)$ has no minor isomorphic to $(K_4,E(K_4))$ or $K_3^=$. 
\end{abstract}

\section{Introduction}
A \emph{signed graph} is a pair $(G,\Sigma)$, where $G=(V,E)$ is a graph (in which parallel edges are permitted, but loops are not)  with $V=\{1,\ldots,n\}$ and $\Sigma\subseteq E$. (We refer to \cite{Diestel} for the notions and concepts in Graph Theory.) The edges in $\Sigma$ are called \emph{odd} and the other edges \emph{even}. If $V=\{1,2,\ldots,n\}$, we denote by $S(G,\Sigma)$ the set of all real symmetric $n\times n$ matrices $A=[a_{i,j}]$ with 
\begin{itemize}
\item $a_{i,j} < 0$ if $i$ and $j$ are connected by only even edges, \item $a_{i,j}>0$ if $i$ and $j$ are connected by only odd edges, 
\item $a_{i,j}\in \mathbb{R}$ if $i$ and $j$ are connected by both even and odd edges, 
\item $a_{i,j}=0$ if $i\not=j$ and $i$ and $j$ are non-adjacent, and 
\item $a_{i,i} \in \mathbb{R}$ for all vertices $i$. 
\end{itemize}
In \cite{AraHalLivdH2012} we introduced for any signed graph $(G,\Sigma)$ the signed graph parameter $\nu$. In order to describe this parameter we need the notion of \emph{Strong Arnold Property} (\emph{SAP} for short). A matrix $A=[a_{i,j}]\in S(G,\Sigma)$ has the SAP if $X=0$ is the only symmetric matrix $X=[x_{i,j}]$ such that $x_{i,j} = 0$ if $i$ and $j$ are adjacent vertices or $i=j$, and $A X = 0$. Then $\nu(G,\Sigma)$ is defined as the largest nullity of any positive semidefinite matrix $A\in S(G,\Sigma)$ that has the SAP. 

If $G$ is a graph and $v$ a vertex of $G$, then $\delta(v)$ denotes the set of edges of $G$ incident to $v$. If $G$ is a graph and $U\subseteq V(G)$, then $\delta(U)$ denotes the set of edges of $G$ that have one end in $U$ and one end in $V(G)\setminus U$. 
The symmetric difference of two sets $A$ and $B$ is the set $A\Delta B = A\setminus B\cup B\setminus A$.
If $(G,\Sigma)$ is a signed graph and $U\subseteq V(G)$, we say that $(G,\Sigma)$ and $(G,\Sigma\Delta\delta(U))$ are \emph{sign-equivalent} and call the operation $\Sigma\to \Sigma\Delta\delta(U)$ \emph{re-signing on $U$}. Re-signing on $U$ amounts to performing a diagonal similarity on the matrices in $S(G,\Sigma)$, and hence it does not affect $\nu(G,\Sigma)$. We call a cycle $C$ of a signed graph $(G,\Sigma)$ \emph{odd} if $\Sigma\cap E(C)$ has an odd number of elements, otherwise  we call $C$ \emph{even}. We call a signed graph \emph{bipartite} if it has no odd cycles. Zaslavsky showed in \cite{MR676405} that two signed graphs are sign-equivalent if and only if they have the same set of odd cycles. Thus, two signed graphs $(G,\Sigma)$ and $(G,\Sigma')$ that have the same set of odd cycles have $\nu(G,\Sigma) = \nu(G,\Sigma')$.  

\emph{Contracting} an even edge $e=uv$ in a signed graph $(G,\Sigma)$ means deleting $e$ and identifying the vertices $u$ and $v$, retaining the signs on the other edges. Contracting an odd edge $e=uv$ in a signed graph $(G,\Sigma)$ means first re-signing around $u$ (or $v$) and then contracting $e$ in the resulting signed graph. Note that if $(H,\Sigma')$ is obtained from $(G,\Sigma)$ by contracting an edge $e$, then the sign of each cycle $C$ containing $e$ in $(G,\Sigma)$ is the same as the sign of the cycle $C'$ in $(H,\Sigma')$ obtained from $C$ by contracting $e$. A subgraph of a signed graph is defined similarly as in the graph case. A signed graph $(H,\Sigma')$ is a \emph{minor} of a signed graph $(G,\Sigma)$ if $(H,\Sigma')$ is sign-equivalent to a signed graph that can be obtained from $(G,\Sigma)$ by contracting a sequence of edges in a subgraph of $(G,\Sigma)$. The parameter $\nu$ has a very nice property which is stated in the following theorem.

\begin{thm}\cite{AraHalLivdH2012}\label{thm:numinor}
If $(H,\Omega)$ is a minor of a signed graph $(G,\Sigma)$, then $\nu(H,\Omega)\leq \nu(G,\Sigma)$.
\end{thm}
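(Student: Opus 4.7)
My plan is to reduce the claim to monotonicity under each of four atomic minor operations: sign-equivalence, vertex deletion, edge deletion, and contraction of an even edge. Sign-equivalence is immediate, since re-signing on $U$ corresponds to conjugation by the diagonal $\pm1$ matrix $D$ with $D_{ii}=-1$ iff $i\in U$, and this preserves positive semidefiniteness, nullity, and the SAP. Since contraction of an odd edge is defined in the introduction as re-signing followed by contraction of an even edge, only the last three operations need attention.

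In each of the three remaining cases I would start with $A'\in S(H,\Omega)$ that is PSD, has the SAP, and attains nullity $\nu(H,\Omega)=d$, and build from it an $A\in S(G,\Sigma)$ that is PSD with the SAP and has nullity $\geq d$, which then gives $\nu(G,\Sigma)\geq d$. For the deletion of an isolated vertex $v$, I take $A := A'\oplus [\lambda]$ with $\lambda>0$: this is PSD with nullity exactly $d$, and the SAP is checked by noting that the row of $AX=0$ at $v$ gives $\lambda X_{vk}=0$, forcing the $v$-row and $v$-column of $X$ to vanish, after which the SAP of $A'$ kills the rest. A non-isolated vertex deletion factors into edge deletions followed by an isolated-vertex deletion. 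For an edge deletion $H=G-e$ with $e=ij$, in the nontrivial subcase where $e$ is the only edge between $i$ and $j$, I would perturb $A'$ by adding $s(e_i-e_j)(e_i-e_j)^T$ for a small $s>0$ (for $e$ even, so $A_{ij}=-s$ has the correct sign), and then use the SAP of $A'$ to make further small adjustments of the diagonal and of the other entries within $S(G,\Sigma)$ that keep the nullity equal to $d$; the SAP is what makes the variety of symmetric matrices of nullity $d$ and the ambient sign-pattern affine subspace intersect transversally at $A'$, so that nearby matrices of nullity $d$ exist in the enlarged pattern space.

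The main technical case is the contraction of an even edge $e=ij$, with $w$ the identified vertex in $G/e$. I would split $w$ back into $i,j$ by introducing a new coordinate for $j$ with $A_{jj}=\alpha$ a large positive scalar, $A_{ij}=-t<0$, and $A_{jk}$ for $k\neq i,j$ chosen to respect the sign of the edge $jk$ in $G$ if present and set to $0$ otherwise; then the entries $A_{ik}$ for $k\neq i,j$, together with $A_{ii}$ and the remaining entries, are determined by requiring that the Schur complement $A/\{j\}$ equal $A'$ under the identification of $i\in V(G)$ with $w\in V(G/e)$. For $\alpha$ sufficiently large relative to $t$ and the $A_{jk}$, the Schur complement corrections $A_{kj}A_{jl}/\alpha$ are arbitrarily small, so all sign constraints of $S(G,\Sigma)$ on the entries of $A$ are met; the Schur complement identity then yields PSD-ness and nullity exactly $d$.

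The hardest step, which I expect to be the main obstacle, is verifying the SAP of $A$ from the SAP of $A'$. Given a symmetric $X$ with the required zero pattern for $G$ and $AX=0$, the row of $AX=0$ at $j$ gives a linear relation expressing $X_{jk}$ in terms of $X_{ik}$ and the $X_{lk}$ for $l\in N_G(j)\setminus\{i\}$, which lets me define a companion matrix $X'$ on $V(G/e)$ by combining the $i$- and $j$-rows of $X$ into a single row at $w$; a direct computation, using the identity $A/\{j\}=A'$ and the equations $(AX)_{ik}=0$, then shows that $X'$ has the required zero pattern for $G/e$ and satisfies $A'X'=0$, so the SAP of $A'$ forces $X'=0$, and tracing back through the $j$-row relation gives $X=0$. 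The combinatorial bookkeeping in splitting the neighborhood of $w$ between $i$ and $j$, compatibly with both the edge/sign structure of $G$ and the algebraic constraint $A/\{j\}=A'$, is where this verification demands the most care.
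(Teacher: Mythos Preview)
The paper does not prove this theorem; it is quoted from \cite{AraHalLivdH2012} and used as a black box. So there is no in-paper argument to compare against, and your outline is essentially the standard route for Colin de Verdi\`ere--type invariants. The treatment of re-signing, isolated-vertex deletion, and edge deletion (via the transversality interpretation of the SAP) is correct, if terse.

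There is, however, a genuine gap in your contraction step. You propose to choose $A_{jj}=\alpha$, $A_{ij}=-t$, the entries $A_{jk}$ for $k\in N_G(j)\setminus\{i\}$, and then to determine \emph{all} remaining entries of $A$ by forcing $A/\{j\}=A'$. For $k,l\notin\{i,j\}$ this forces
\[
A_{kl}=A'_{kl}+\frac{A_{kj}A_{jl}}{\alpha}.
\]
Now take $k,l\in N_G(j)\setminus\{i\}$ that are non-adjacent in $G$. Contraction of $e=ij$ does not create an edge between $k$ and $l$, so $A'_{kl}=0$, while $A_{kj}A_{jl}\neq 0$; hence the formula gives $A_{kl}\neq 0$, contradicting $A\in S(G,\Sigma)$. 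Your sentence ``the Schur complement corrections $A_{kj}A_{jl}/\alpha$ are arbitrarily small, so all sign constraints of $S(G,\Sigma)$ on the entries of $A$ are met'' overlooks that the constraint at a non-edge is the \emph{equality} $A_{kl}=0$, not an open sign condition; making the correction small does not make it zero. So as written you cannot simultaneously have $A\in S(G,\Sigma)$ and $A/\{j\}=A'$ whenever $j$ has two non-adjacent neighbours other than $i$.

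The usual repair is to bring the transversality argument into the contraction step as well: by the SAP of $A'$, the corank-$d$ stratum meets transversally not only $\overline{S(G/e,\Omega)}$ but also the slightly larger pattern space in which the finitely many pairs $(k,l)$ with $k,l\in N_G(j)\setminus\{i\}$ are allowed to be nonzero. One can then move $A'$ within that corank-$d$ stratum to a nearby positive semidefinite matrix whose $(k,l)$ entries are \emph{exactly} $-A_{kj}A_{jl}/\alpha$ (these targets are $O(1/\alpha)$-small), after which the inverse Schur complement yields an $A\in S(G,\Sigma)$ with nullity $d$. The SAP verification you sketch then goes through. This is precisely ``the combinatorial bookkeeping'' you flag as the hard part, but the obstruction appears one step earlier than you anticipated, in the construction of $A$ itself rather than only in the SAP check.
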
 

In \cite{AraHalLivdH2012} we also proved that a signed graph $(G,\Sigma)$ has $\nu(G,\Sigma)\leq 1$ if and only if $(G,\Sigma)$ is bipartite. For a positive integer $n$, we denote by $K_n^o$ the signed graph $(K_n,E(K_n))$ and by $K_n^=$ the signed graph $(G,E(K_n))$, where $G$ is the graph obtained from $K_n$ by adding to each edge an edge in parallel. 
In Figure~\ref{K4oK3=}, the signed graph $K_4^o$ and $K_3^=$ are depicted; here a bold edge denotes an odd edge and a thin edge an even edge. In this paper we prove the following theorem.

\begin{figure}
\begin{center}
\includegraphics[width=0.7\textwidth]{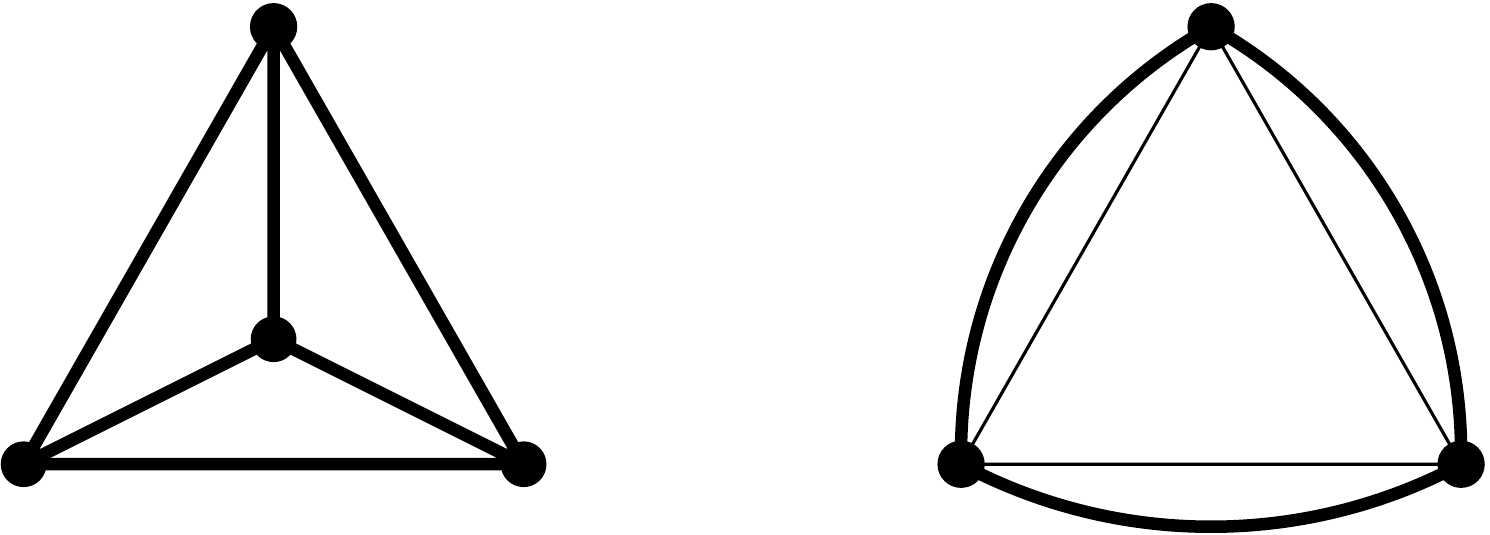}
\end{center}
\caption{$K_4^o$ and $K_3^=$}\label{K4oK3=}
\end{figure}

\begin{thm}\label{thm:mainthm}
A signed graph $(G,\Sigma)$ has $\nu(G,\Sigma)\leq 2$ if and only if $(G,\Sigma)$ has no minor isomorphic to $K_4^o$ or $K_3^=$.
\end{thm}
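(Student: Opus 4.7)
The plan is to argue the two directions separately, with necessity being straightforward.

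For necessity, by Theorem~\ref{thm:numinor} it suffices to exhibit positive semidefinite matrices of nullity three with the SAP in $S(K_4^o)$ and $S(K_3^=)$. For $K_3^=$, every pair of distinct vertices is joined by both an odd and an even edge, so every off-diagonal entry is unrestricted; the $3\times 3$ zero matrix therefore lies in $S(K_3^=)$, is PSD with nullity three, and satisfies the SAP trivially since every pair of vertices is adjacent and forces the test matrix $X$ to vanish. For $K_4^o$, take $A=vv^T$ with $v=(1,1,1,1)^T$: the off-diagonals are all positive, $A$ is PSD of nullity three, and again the SAP holds trivially because the underlying graph is complete.

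For sufficiency, I would argue by induction on $|V(G)|+|E(G)|$. If $(G,\Sigma)$ is bipartite, then $\nu(G,\Sigma)\leq 1$ by the characterization of $\nu\leq 1$ proved in \cite{AraHalLivdH2012}. Otherwise one may re-sign, which preserves both $\nu$ and the minor hypothesis by Zaslavsky's theorem, so that the set of odd edges is as small as possible---say, a single odd edge lying inside a shortest odd cycle. The next step is to exploit small vertex cuts: if $(G,\Sigma)$ has a cut vertex or a 2-vertex cut, split it into two pieces, verify that each piece still excludes $K_4^o$ and $K_3^=$ as minors, invoke induction on each, and combine via a gluing lemma analogous to those available for the classical Colin de Verdi\`ere parameter.

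The main obstacle is the 3-connected case. Here I would proceed by contradiction: assume $(G,\Sigma)$ is 3-connected, not bipartite, and has no $K_4^o$ or $K_3^=$ minor. Fix a shortest odd cycle $C$. If $V(G)=V(C)$, analyze $C$ together with any chord directly. Otherwise, 3-connectivity provides three internally disjoint paths from a vertex $v\notin V(C)$ to $V(C)$; by contracting suitable even subpaths, re-signing locally, and tracking cycle parities via Zaslavsky's theorem, one should be able to produce either a $K_4^o$ minor (when the three paths together with arcs of $C$ contract to an odd $K_4$) or a $K_3^=$ minor (when the contractions leave parallel odd and even edges between three pairs of vertices), giving the desired contradiction. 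The case analysis on the length of $C$, the parities of the three paths, and their attachment points on $C$ is the technical heart of the argument, and is where I expect most of the work in the paper to lie.
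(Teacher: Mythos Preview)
Your necessity argument is fine and essentially matches the paper (which simply cites $\nu(K_4^o)=\nu(K_3^=)=3$ from \cite{AraHalLivdH2012}).

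The sufficiency argument has a fatal gap in the 3-connected case. You propose to show that every 3-connected non-bipartite signed graph must contain a $K_4^o$- or $K_3^=$-minor, deriving a contradiction. This claim is false. Take $(K_4,\{e\})$ with a single odd edge $e$: it is 3-connected and non-bipartite, but it has exactly two odd triangles while $K_4^o$ has four, so it is not sign-equivalent to $K_4^o$; and contracting any edge of $K_4$ leaves only five edges, so there is no $K_3^=$-minor either. More generally, almost-bipartite signed graphs, planar signed graphs with two odd faces, and the double prism are all non-bipartite, can be 3-connected, and have no $K_4^o$- or $K_3^=$-minor. So the path-and-contraction analysis you sketch cannot terminate in a contradiction; the whole strategy of finding a forbidden minor in the 3-connected base case is doomed.

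The paper takes a completely different route. It invokes Gerards' decomposition theorem (Theorem~\ref{thm:gerards}), itself resting on Seymour's regular matroid decomposition: every signed graph with no $K_4^o$- or $K_3^=$-minor either has a $0$-, $1$-, $2$-, or $3$-split, or is almost bipartite, or is planar with at most two odd faces, or is the double prism. The splits are handled much as you suggest (Theorems~\ref{thm:3split} and~\ref{thm:12split}), reducing to a minimal counterexample with no split. The real work is then to bound $\nu$ directly for each of the three base classes: almost bipartite is a short kernel argument (Lemma~\ref{lem:almostbipartite}); planar with two odd faces uses a dualized Epifanov-type $\Delta Y$-reducibility to $K_2^=$ together with invariance of $\nu$ under $\Delta Y$-exchanges and series/parallel reductions (Lemma~\ref{lem:twooddfaces}); the double prism is reduced to $K_2^=$ by hand (Lemma~\ref{lem:doubleprism}). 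None of this is the minor-finding case analysis you anticipate; the structural input from Gerards' theorem is the missing key idea in your proposal.
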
 

One direction of the proof of this theorem follows easily from Theorem~\ref{thm:numinor}: As $\nu(K_4^o) = \nu(K_3^=) = 3$ (see \cite{AraHalLivdH2012}), any signed graph $(G,\Sigma)$ with $\nu(G,\Sigma)\leq 2$ cannot have a minor isomorphic to $K_4^o$ or $K_3^=$. The proof of the opposite direction spans the major part of this paper. In the proof we will use a decomposition theorem of Gerards~\cite{Ger90a, RePEc:dgr:kubrem:1986237}, see the next section for the statement of this theorem. 

The parameter $\nu$ for signed graphs is analogous to the graph parameter $\nu$ introduced by Colin de Verdi\`ere in \cite{CdeV3}.  
For a simple graph $G=(V,E)$ with $V=\{1,2,\ldots,n\}$, denote by $S(G)$ the set of all real symmetric $n\times n$ matrices $A=[a_{i,j}]$ with $a_{i,j}\not=0$ if $i\not=j$ and $i$ and $j$ are adjacent, $a_{i,j}=0$ if $i\not=j$ and $i$ and $j$ are non-adjacent, and $a_{i,i}\in \mathbb{R}$ for all $i\in V$.
For a simple graph $G$, $\nu(G)$ is defined to be the largest nullity of any positive semidefinite matrix $A\in S(G)$ having the SAP. This parameter has the property that if $H$ is a minor of $G$, then $\nu(H)\leq \nu(G)$. 
Colin de Verdi\`ere showed in \cite{CdeV3} that for simple graphs, $\nu(G)\leq 1$ if and only if $G$ is a forest. 
The simple graphs $G$ with $\nu(G)\leq 2$ have been characterized by Kotlov \cite{Kotlov2000a}. The parameter $\nu$ can be extended to graphs in which parallel edges are permitted, but loops are not; see \cite{Holst96a}. In \cite{Holst97a}, van der Holst gave a characterization of graphs $G$ with $\nu(G)\leq 2$. In \cite{Holst2002b}, van der Holst gave a characterization of graphs $G$ with $\nu(G)\leq 3$. 


The \emph{maximum nullity} of $(G,\Sigma)$, denoted $M(G,\Sigma)$, is the maximum of the nullities of the matrices in $S(G,\Sigma)$. 
The \emph{maximum semidefinite nullity} of $(G,\Sigma)$, denoted $M_+(G,\Sigma)$, is the maximum of the nullities of the positive semidefinite matrices in $S(G,\Sigma))$. Clearly, $M_+(G,\Sigma)\leq M(G,\Sigma)$. As $\nu(G,\Sigma)$ is a lower bound of $M_+(G,\Sigma)$, it can be used to obtain lower bounds for $M_+(G,\Sigma)$ using minors of $(G,\Sigma)$. For example, let $K_2^=$ denote the signed graph $(C_2,\{e\})$, where $C_2$ is the $2$-cycle and $e$ an edge of $C_2$. A signed graph $(G,\Sigma)$ has a minor isomorphic to $K_2^=$ if and only if $(G,\Sigma)$ has an odd cycle. In \cite{AraHalLivdH2012}, we proved that $\nu(K_2^=)=2$, and so if a signed graph $(G,\Sigma)$ contains an odd cycle, then $\nu(K_2^=)=2\leq M_+(G,\Sigma)$. Hence a signed graph $G$ with $M_+(G,\Sigma)\leq 1$ does not contain any odd cycle, that is, $(G,\Sigma)$ is bipartite. The complete characterization of signed graphs $(G,\Sigma)$ with $M_+(G,\Sigma)\leq 1$ is as follows: $M_+(G,\Sigma)\leq 1$ if and only if $(G,\Sigma)$ is connected and bipartite. See \cite{AraHalLivdH2012} for a proof.

%
%
%
%
%
%

\section{A decomposition theorem of Gerards}

For presenting the decomposition theorem of Gerards, we need to introduce the notion of $k$-split; see \cite{Ger90a}. 

Let $(G,\Sigma)$ be a signed graph. Suppose $E_1,E_2$ is a partitioning of $E(G)$ with both $E_1$ and $E_2$ nonempty. For $i=1,2$, let $V_i$ be the set of all ends of edges in $E_i$, and let $\tilde{G_i} = (V_i,E_i)$.

Let $k = \lvert V_1\cap V_2\rvert$. If $k\leq 1$, then we say that $(\tilde{G_1}, E_1\cap \Sigma)$ and $(\tilde{G_2}, E_2\cap \Sigma)$ form a \emph{$k$-split} of $(G,\Sigma)$. The signed graphs $(\tilde{G_1}, E_1\cap \Sigma)$ and $(\tilde{G_2}, E_2\cap \Sigma)$ are called the parts of the $k$-split. 

Suppose that $V_1\cap V_2 = \{u,v\}$ with $u\not=v$ and that $\tilde{G_i}$ for $i=1,2$ is connected and not a signed subgraph of $K_2^=$.
Define the signed graph $(G_1,\Sigma_1)$ as follows. If $(\tilde{G_2},E_2\cap \Sigma)$ is not bipartite, add between the vertices $u$ and $v$ of $(\tilde{G_1},E_1\cap \Sigma)$ an odd and an even edge. If $(\tilde{G_2}, E_2\cap \Sigma)$ is bipartite, add a single edge between $u$ and $v$. Add $e$ to $\Sigma_1$ if and only if there exists an odd path between $u$ and $v$ in $(\tilde{G_2},E_2\cap\Sigma)$. The signed graph $(G_2,\Sigma_2)$ is defined similarly. We say that $(G_1,\Sigma_1)$ and $(G_2,\Sigma_2)$ form a \emph{$2$-split} of $(G,\Sigma)$. The signed graphs $(G_1, \Sigma_1)$ and $(G_2, \Sigma_2)$ are called the parts of the $2$-split. If $(\tilde{G_i}, E_i\cap \Sigma)$ is not bipartite for $i=1,2$, then we call the $2$-split \emph{strong}.

Suppose $|V_1\cap V_2|=3$, say $V_1\cap V_2=\{u_1,u_2,u_3\}$. Furthermore, assume that $\tilde{G_2}$ is bipartite and connected, and that $|E_2|\geq 4$. Define $G_1$ as follows. Let $w$ be a new node, let $V(G_1)=V_1\cup \{w\}$ and $E(G_1)=E_1\cup \{u_1w, u_2w, u_3w\}$. Define $\tilde{\Sigma}$ to be the subset of $\{u_2w, u_3w\}$ which has $u_iw\in \tilde{\Sigma}$ if and only if there exists an odd path from $u_1$ to $u_i$ in $(\tilde{G_2}, E_2\cap \Sigma)$. Define $\Sigma_1=(E_1\cap \Sigma)\cup \tilde{\Sigma}$. We say that $(G_1, \Sigma_1)$ forms a \emph{$3$-split} of $(G,\Sigma)$. We call $(G_1,\Sigma_1)$ the part of the $3$-split (so a $3$-split has only one part).

A signed graph $(G,\Sigma)$ is called \emph{almost bipartite} if there exists a vertex $v\in V(G)$ such that $v\in V(C)$ for each odd cycle $C$.
A signed graph $(G,\Sigma)$ is said to be \emph{planar with two odd faces} if $G$ can be embedded in the plane such that all but two faces have a bounding cycle that is even.
Let $G$ be the graph obtained from two disjoint $K_3$s by connecting the vertices of one $K_3$ to the other $K_3$ by two parallel edges in a one-to-one way. From each parallel class with two edges, choose one edge, and let $\Sigma$ be the set of all these edges. We call the signed graph $(G,\Sigma)$ the \emph{double prism}. See Figure~\ref{fig:doubleprism}.

\begin{figure}
\begin{center}
\includegraphics[width=0.4\textwidth]{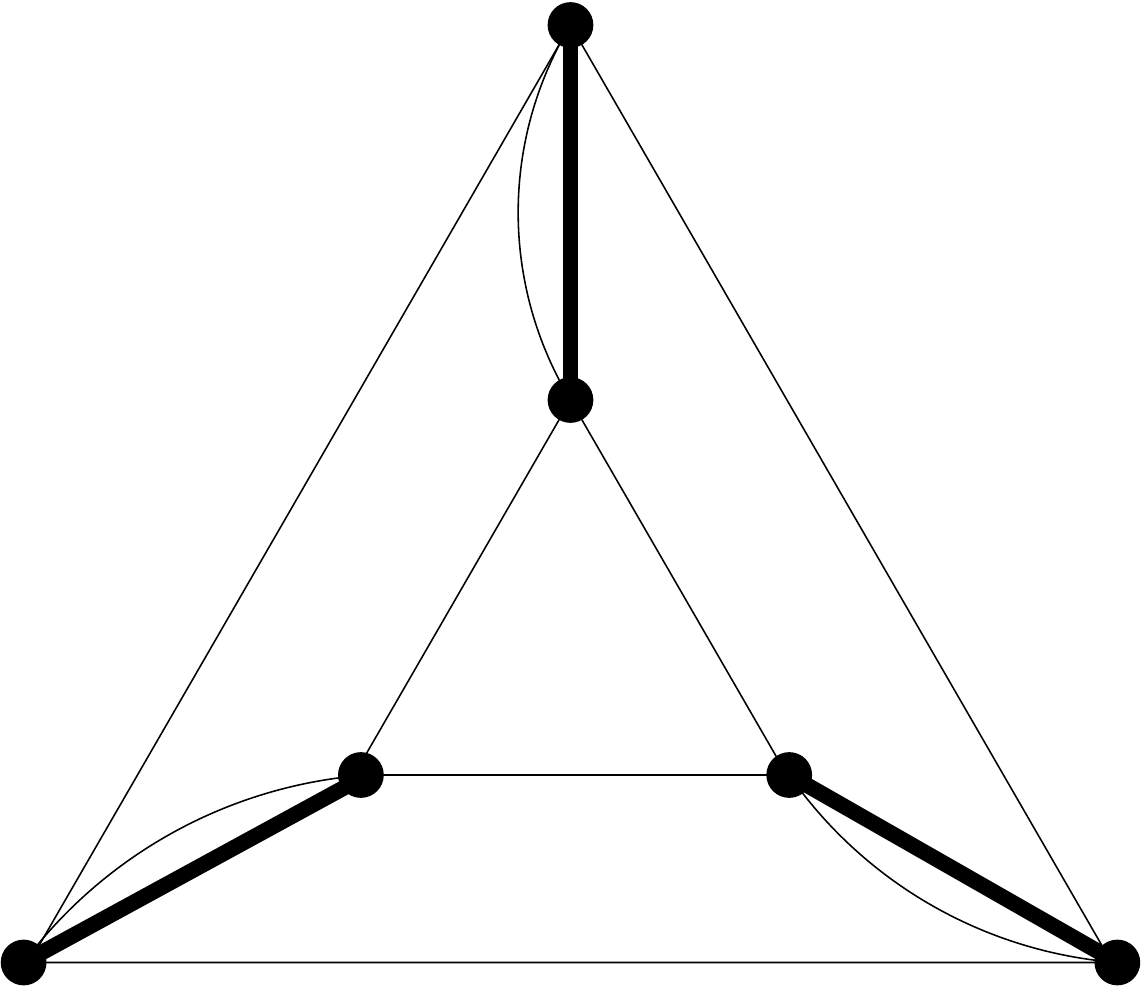}
\end{center}
\caption{The double prism}\label{fig:doubleprism}
\end{figure}

In \cite{Ger90a}, Gerards proved the following theorem; the theorem is stated in \cite{Ger90a} for signed graphs in which loops are permitted, but it is clear that the theorem still holds if we do not permit loops.

\begin{thm}\label{thm:gerards}
Let $(G,\Sigma)$ be a signed graph, with no $K_4^o$- and no $K_3^=$-minor. Then at least one of the following holds:
\begin{enumerate}
\item $(G,\Sigma)$ has a $0$-, $1$-, $2$-, or $3$-split;
\item $(G,\Sigma)$ is almost bipartite;
\item $G$ is planar with at most two odd faces;
\item\label{item:specgraph} $(G,\Sigma)$ is sign-equivalent to the double prism.
\end{enumerate}
\end{thm}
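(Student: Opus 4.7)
The plan is to reduce to the case where $(G,\Sigma)$ admits none of the splits listed in item~(1) of Theorem~\ref{thm:gerards}, and then show that such a signed graph must fall into one of the remaining three categories. The absence of $0$-, $1$-, and $2$-splits forces $G$ to be $3$-connected (modulo trivially small cases), while the absence of $3$-splits prevents peeling off a connected bipartite fragment across a $3$-vertex cut. Together these conditions give a signed analogue of ``internal $4$-connectivity'' that is strong enough to pin down the global structure, and from here on I will assume this reduced setting.

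I would first normalize the signing by passing to a re-signing $\Sigma'$ of minimum cardinality within its sign-equivalence class. A standard exchange argument yields $|\Sigma'|\leq |\Sigma'\Delta\delta(U)|$ for every $U\subseteq V(G)$, which translates into a local parity condition at every vertex and lets one treat the odd edge set as a combinatorial object sitting inside $G$. With this normalization in place, the two forbidden minors can be used to constrain where the edges of $\Sigma'$ live. The exclusion of $K_3^=$ controls parallel edges of mixed parity and bounds the occurrence of short odd cycles within any parallel class, while the deeper exclusion of $K_4^o$ forbids four internally disjoint odd-cycle links arranged in the cross-pattern that an odd-$K_4$ subdivision would require.

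The heart of the proof is a Menger/linkage-type dichotomy (the technical core of Gerards' approach) that, under the $K_4^o$-exclusion and the connectivity hypotheses, yields the following: either there is a single vertex $v$ incident to every odd cycle, in which case $(G,\Sigma)$ is almost bipartite and item~(2) holds, or else all odd cycles can be simultaneously captured by a small planar substructure. In the latter case I would construct a planar embedding of $G$ and, after re-signing so that $\Sigma'$ is realized as the symmetric difference of a minimum collection of face boundaries, argue that the $K_4^o$-exclusion forces the number of odd faces to be at most two, yielding item~(3). The planarity step requires the signed analogue of Kuratowski/Wagner together with the internal $4$-connectivity established in the first step.

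The main obstacle I anticipate is handling the unique non-planar exception: an internally $4$-connected signed graph that is neither almost bipartite nor planar with at most two odd faces, yet still avoids both $K_4^o$ and $K_3^=$ as minors. Showing that the only such signed graph is sign-equivalent to the double prism---item~(4)---requires a careful case analysis. One would first use the high connectivity to force $G$ to contain a $K_{3,3}$ or $K_5$ subdivision, then rule out $K_5$ (which already yields a $K_4^o$-minor under any signing consistent with the hypotheses), and finally classify all sign assignments on a $K_{3,3}$-like skeleton that simultaneously avoid $K_4^o$, $K_3^=$, and all the small splits, verifying that the sole surviving configuration is the double prism. Confirming that the double prism itself satisfies the hypotheses of the theorem then closes out the argument.
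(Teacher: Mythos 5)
First, note that the paper does not prove this theorem at all: it is quoted from Gerards \cite{Ger90a}, and the route there is matroidal. One associates to $(G,\Sigma)$ the extended even cycle matroid $\mathcal{M}(G,\Sigma)$, observes that excluding $K_4^o$ and $K_3^=$ as signed-graph minors is equivalent to excluding the Fano plane and its dual, i.e.\ to $\mathcal{M}(G,\Sigma)$ being regular, and then invokes Seymour's decomposition of regular matroids into graphic pieces (giving almost bipartite signed graphs), cographic pieces (giving planar signed graphs with two odd faces), and copies of a certain ten-element matroid (giving the double prism), with the matroid sums translating into the $0$-, $1$-, $2$-, and $3$-splits. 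Your proposal attempts a direct graph-theoretic proof instead, which would be a genuinely different and interesting route, but as written it does not constitute a proof: the central dichotomy you invoke --- ``either a single vertex meets every odd cycle, or all odd cycles are captured by a planar substructure realizable with at most two odd faces'' --- is precisely the entire content of the theorem in the reduced setting, and you assert it rather than prove it. The minimum re-signing normalization and the local parity condition are fine as far as they go, but nothing in the proposal actually derives the dichotomy from the exclusion of $K_4^o$; the ``Menger/linkage-type'' step is a placeholder where the whole difficulty lives.

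There is also a concrete error in your treatment of the exceptional case. You characterize the double prism as ``the unique non-planar exception'' and plan to isolate it by forcing a $K_{3,3}$ or $K_5$ subdivision via Kuratowski. But the underlying graph of the double prism is the triangular prism with each rung doubled, which is planar; what fails for it is not planarity of $G$ but the existence of a planar embedding in which all but two face boundaries are even. So a Kuratowski-type argument cannot separate case (4) from case (3), and the claim that a $K_5$ subdivision ``already yields a $K_4^o$-minor under any signing consistent with the hypotheses'' is false without further argument (an all-even $K_5$ has no odd cycles at all, hence no $K_4^o$-minor). The exceptional case arises in Gerards' proof from the ten-element matroid in Seymour's theorem being neither graphic nor cographic, not from non-planarity of the underlying graph, and your proposed case analysis would miss it.
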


The idea of the proof of Theorem~\ref{thm:mainthm} is the following. Suppose for a contradiction that a signed graph $(G,\Sigma)$ with  no $K_4^o$- and no $K_3^=$-minor has $\nu(G,\Sigma)\geq 3$. From the results of Section~\ref{sec:nusplits}, we obtain that  if $(G,\Sigma)$ has a $0$-, $1$-, $2$-, or $3$-split, then some part $(H,\Omega)$ of this split has $\nu(H,\Omega)\geq 3$. Hence we may assume that $(G,\Sigma)$ has no $0$-, $1$-, $2$-, or $3$-split. Then $(G,\Sigma)$ is $2$-connected. By Theorem~\ref{thm:gerards}, $(G,\Sigma)$ is almost bipartite, planar with at most two odd faces, or sign-equivalent to the double prism. In Section~\ref{sec:boundingnullity} we will show that each signed graph $(G,\Sigma)$ belonging to at least one of these latter classes has $\nu(G,\Sigma)\leq 2$, which yields a contradiction.

It is interesting to note that the proof of the above theorem given by Gerards uses the decomposition theorem of regular matroids of Seymour \cite{Seymour80a}. The connection is the following. If $(G,\Sigma)$ is a signed graph, then the \emph{extended even cycle matroid}, $\mathcal{M}(G,\Sigma)$, of $(G,\Sigma)$ is the binary matroid respresented over $\mbox{GF}(2)$ by the matrix
\begin{equation*}
\begin{bmatrix}
1 & \chi_{\Sigma}\\
0 & M_G
\end{bmatrix}.
\end{equation*}
Here $\chi_{\Sigma}$ denotes the characteristic vector of $\Sigma$ as a subset of $E(G)$ and $M_G$ denotes the vertex-edge incident matrix over $\mbox{GF}(2)$ of $G$. 
If $\mathcal{N}$ is a minor of the matroid $\mathcal{M}(G,\Sigma)$, then $\mathcal{N}$ can be represented over $\mbox{GF}(2)$ by a matrix
\begin{equation*}
\begin{bmatrix}
1 & \chi_{\Omega}\\
0 & M_H
\end{bmatrix},
\end{equation*}
where $M_H$ denotes the vertex-edge incident matrix over $\mbox{GF}(2)$ of a graph $H$ and $\chi_{\Omega}$ denotes the characteristic vector of a subset $\Omega$ of $E(H)$, such that $(H,\Omega)$ is a minor of $(G,\Sigma)$.
Now, $\mathcal{M}(K_3^=)$ and $\mathcal{M}(K_4^o)$ are isomorphic to the Fano plane and the dual of the Fano plane, respectively. Since regular matroids are exactly those binary matroids with no minor isomorphic to the Fano plane or the dual of the Fano plane, $\mathcal{M}(G,\Sigma)$ is regular for any signed graph $(G,\Sigma)$ with no $K_4^o$- and no $K_3^=$-minor. Seymour showed in \cite{Seymour80a} that any regular matroid decomposes into graphic and cographic matroids, and copies of a certain $10$-element matroid. The translation of regular matroids to signed graphs is as follows: the graphic matroids yield signed graphs that are almost bipartite, cographic matroids yield planar signed graph with two odd faces, and the $10$-element matroid yields the double prism.

The result of Seymour also provides a polynomial-time algorithm for recognizing whether a binary matroid is regular or not, see \cite{Schrijver86}. Hence there exists a polynomial-time algorithm for recognizing whether or not a signed graph has no $K_4^o$- and no $K_3^=$-minor. Together with Theorem~\ref{thm:mainthm} this yields a polynomial-time algorithm for testing whether or not a signed graph $(G,\Sigma)$ has $\nu(G,\Sigma)\leq 2$.

\section{Planar graphs with two odd faces}

In this section we show that planar graphs with two odd faces can be reduced to $K_2^=$ using certain transformations and reductions. In Section~\ref{sec:nusplits}, we will see that these transformations and reductions do not decrease $\nu$, and therefore a planar graph with two odd faces $(G,\Sigma)$ has $\nu(G,\Sigma)\leq \nu(K_2^=) = 2$. In this section, loops are permitted in the graphs.
 
A \emph{degree-one reduction} in a graph means that we delete a vertex of degree one. A \emph{loop reduction} in a graph means that we delete a loop. A \emph{series reduction} in a graph means that we delete a vertex $v$ of degree two which has two neighbors and connect the two neighbors of $v$ by an edge. A \emph{parallel reduction} in a graph means that we delete all but one edge in a class of parallel edges. A \emph{$Y\Delta$-transformation} in a graph means that we delete a vertex of degree three which has three neighbors and add between each pair of these neighbors an edge. A \emph{triangle} in a graph is a subgraph isomorphic to $K_3$. A \emph{$\Delta Y$-transformation} means that we delete the edges of a triangle and add a new vertex and edges between this new vertex and the vertices of the triangle. A \emph{$\Delta Y$-exchange} is a $\Delta Y$- or $Y\Delta$-transformation.

We say that a graph $G$ with two particular distinct vertices, called \emph{terminals}, is \emph{$\Delta Y$-reducible} to  $G'$ if $G'$ can be obtained from $G$ by some sequence of degree-one, loop, series, and parallel reductions and $\Delta Y$-exchanges, where the terminals cannot be deleted by a reduction or transformation.

\begin{thm}[Epifanov \cite{Epifanov}]\label{thm:epifanov}
Any connected planar graph with two distinct terminals is $\Delta Y$-reducible to a single edge where the two terminals are the ends of this edge.
\end{thm}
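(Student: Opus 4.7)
The plan is to argue by strong induction on some suitable measure of the size of $G$, for instance the pair $(|E(G)|, |V(G)|)$ in lexicographic order, showing that whenever $G$ is not already reduced to a single edge joining the two terminals $s$ and $t$, some admissible reduction or $\Delta Y$-exchange can be applied so that the induction hypothesis finishes the job. Throughout, one keeps a fixed planar embedding and, by re-embedding if necessary, places $s$ and $t$ on a common face.

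The easy part is to clear away the ``low-complexity'' configurations. If $G$ contains a loop, apply a loop reduction; if $G$ has two parallel edges, apply a parallel reduction; if $G$ has a non-terminal vertex of degree one, apply a degree-one reduction; and if $G$ has a non-terminal vertex of degree two with two distinct neighbors, apply a series reduction. In each case the resulting graph is still connected and planar, still contains $s$ and $t$, and is strictly smaller. So we may assume $G$ is simple, loopless, $2$-connected, has no degree-one or degree-two non-terminal vertex, and (after one more round of block analysis) is essentially $3$-connected away from the terminals.

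The core planarity input comes from Euler's formula: a simple planar graph with $n \geq 3$ vertices has at most $3n - 6$ edges, so its average degree is strictly less than $6$. A discharging argument shows that such a graph must contain either a non-terminal vertex of degree exactly three (to which a $Y\Delta$-transformation applies) or a triangular face (to which a $\Delta Y$-transformation applies). Thus at every stage there is \emph{some} admissible move. The naive difficulty, however, is that a single $\Delta Y$- or $Y\Delta$-move keeps the edge count essentially constant, so it does not directly feed the induction.

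The main obstacle, and the real content of Epifanov's theorem, is therefore to prove termination. My plan is to introduce a monovariant that decreases under a carefully chosen sequence of moves. Concretely, one can follow the Feo--Provan style strategy: fix a shortest curve in the plane from $s$ to $t$ meeting edges transversally, and show that after a suitable $\Delta Y$- or $Y\Delta$-exchange adjacent to this curve, followed by the cleanup reductions above, the number of edges crossed by some such curve strictly drops; alternatively, one assigns weights to vertices, edges, and faces so that the weighted total strictly decreases under the combination ``exchange + cleanup.'' Once this monovariant hits its minimum, the graph must consist of the single edge $st$, completing the induction. The existence of a reducible configuration is the light part; arranging the transformations so that progress is genuinely made, without cycling, is where the subtlety lies.
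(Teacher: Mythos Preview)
The paper does not prove this theorem. Theorem~\ref{thm:epifanov} is quoted from the literature with attribution to Epifanov and is used as a black box; no argument for it appears anywhere in the paper. So there is nothing here to compare your proposal against.

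As for the proposal itself: it is a plan, not a proof, and you say as much. The outline is in the spirit of the later proofs of Epifanov's theorem (Feo--Provan, Truemper, Gitler): reduce to a clean $2$-connected simple situation, use Euler's formula to guarantee a degree-$3$ non-terminal or a triangular face, and then control termination via a carefully designed potential. The honest difficulty you flag---that $\Delta Y$- and $Y\Delta$-moves by themselves do not shrink the graph, so one must exhibit a monovariant that strictly drops under ``exchange followed by cleanup''---is exactly where all the work lies, and nothing in your sketch actually discharges it. In particular, the sentence ``one can follow the Feo--Provan style strategy'' is doing all of the heavy lifting; carrying that out rigorously is a paper-length argument in its own right. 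If your intent is to cite the result, that is what the paper does; if your intent is to supply a self-contained proof, what you have written is an outline whose decisive step is still to be filled in.
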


A graph $G$ is called a \emph{block} if for every two distinct edges $e,f$ of $G$, there exists a cycle in $G$ containing $e$ and $f$. So, for example, a block with at least two edges cannot have any loops. A block with at least three vertices is $2$-connected. If $G$ is a plane block, then the dual of $G$ is also a block. (A plane block is a block that is embedded in the plane.) Furthermore, in a plane block a series reduction gives in the dual a parallel reduction; if the block has at least three edges, then a parallel reduction on two parallel edges that bound a face gives in the dual a series reduction; if the block consists of two parallel edges, then a parallel reduction on the two parallel edges gives in the dual a loop; and a $Y\Delta$-transformation gives in the dual a $\Delta Y$-transformation. However, a $\Delta Y$-transformation in a plane block does not necessarily give in the dual a $Y\Delta$-transformation. We call a $\Delta Y$-transformation on a triangle where each of the vertices of the triangle has degree at least three an \emph{allowable $\Delta Y$-transformation}. An allowable $\Delta Y$-transformation on a block gives a block. In a plane block, an allowable $\Delta Y$-transformation gives in the dual a $Y\Delta$-transformation. If a triangle has exactly one vertex, $v$, of degree two in $G$, we call the deletion of the edge in the triangle that is not incident to $v$ a \emph{$\Delta P_2$-reduction}. A $\Delta P_2$-reduction on a block gives a block. (Since the triangle on which we apply the $\Delta P_2$-reduction has exactly one vertex of degree two, the graph has at least four edges.) The dual reduction of a $\Delta P_2$-reduction is the following. If a vertex $v$ in a graph has degree two and three edges incident to it, then a \emph{series-parallel reduction} on $v$ means that we contract the edge incident to $v$ that has no edge parallel to it. In a plane block, a $\Delta P_2$-reduction gives in the dual a series-parallel reduction.
If there are at least two vertices of degree two in the triangle of a graph $G$ which is a block, then $G$ is equal to the triangle. A series reduction on the non-terminal vertex of triangle and then a parallel reduction yields a single edge where the two terminal are the ends of this edge.

We say that a block $G$ with two terminals is \emph{nicely $\Delta Y$-reducible} to  $G'$ if $G'$ can be obtained from $G$ by some sequence of series, parallel, $\Delta P_2$ reductions and allowable $\Delta Y$-transformations and $Y\Delta$-transformations, where the terminals cannot be deleted by a reduction or transformation, and where in each step of the sequence each graph is a block. We summarize as follows.

\begin{thm}\label{thm:planarpre}
Any planar block with two distinct terminals is nicely $\Delta Y$-reducible to a single edge where the two terminals are the ends of this edge.
\end{thm}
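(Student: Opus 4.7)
My plan is to combine Epifanov's Theorem (Theorem~\ref{thm:epifanov}) with the observations collected in the paragraph immediately preceding the statement. Specifically, in Epifanov's reduction from $G$ to a single edge, degree-one and loop reductions are never needed for a block with at least two edges; series, parallel, and $Y\Delta$-transformations are already nice; and the only problematic operations are $\Delta Y$-transformations on triangles containing a vertex of degree two, which can be handled either by a $\Delta P_2$-reduction or, in a small degenerate case, by a series plus a parallel reduction on the triangle itself.

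I would argue by strong induction on the lexicographic pair $(|E(G)|, \ell(G))$, where $\ell(G)$ denotes the length of a shortest $\Delta Y$-reduction sequence from $G$ to a single edge (which exists by Theorem~\ref{thm:epifanov}). The base case is a single edge; since $G$ is a block with two distinct terminals, this edge must join the two terminals. For the inductive step, assume $|E(G)|\geq 2$. Then $G$ has neither loops nor degree-one vertices (such features would contradict the block property unless $G$ were a single edge), so the first step of any Epifanov sequence from $G$ must be a series reduction, a parallel reduction, a $Y\Delta$-transformation, or a $\Delta Y$-transformation.

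If the first step is a series reduction, parallel reduction, or $Y\Delta$-transformation, it is already nice and preserves the block property (as observed in the paragraph before the theorem), and its application decreases the lexicographic measure: the first two strictly reduce $|E(G)|$, and $Y\Delta$ matches an Epifanov step so it strictly reduces $\ell(G)$ while keeping $|E(G)|$ fixed. If the first step is a $\Delta Y$-transformation on a triangle $T=\{u,v,w\}$, I case-split on the degrees of $u,v,w$ in $G$. If all three have degree at least three, the transformation is allowable and nice, preserves blocks, and strictly reduces $\ell(G)$. If exactly one vertex, say $w$, has degree two, I replace the $\Delta Y$-transformation with a $\Delta P_2$-reduction deleting the edge $uv$; this is nice, preserves the block property, and strictly reduces $|E(G)|$. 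If at least two vertices of $T$ have degree two, the $2$-connectivity of $G$ forces $G=T$ (as noted in the paragraph preceding the statement), and then a series reduction at a non-terminal vertex of $T$ followed by a parallel reduction yields the required single edge between the two terminals. In every case the resulting graph is a planar block with the same two terminals and a strictly smaller lexicographic measure, so the induction hypothesis completes the argument.

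The main obstacle, as in Epifanov's original proof, is choosing an induction measure that strictly decreases even under $\Delta Y$- and $Y\Delta$-transformations, which do not change the edge count. The lexicographic pair above resolves this by treating Epifanov's theorem as a black box for the $\ell$-coordinate: every nice operation we perform either strictly reduces $|E(G)|$ or coincides with an Epifanov step and therefore strictly reduces $\ell(G)$. The block-preservation claims for series, parallel, $\Delta P_2$, allowable $\Delta Y$, and $Y\Delta$ operations that drive the induction are precisely those laid out in the text preceding the theorem.
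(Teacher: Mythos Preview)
Your proposal is correct and follows essentially the same approach as the paper, which offers no formal proof but presents the theorem as a summary of the preceding paragraph---introducing the $\Delta P_2$-reduction precisely to replace an Epifanov $\Delta Y$-step whose triangle has a degree-two vertex, and handling the degenerate case $G=T$ directly. Your explicit lexicographic induction on $(|E(G)|,\ell(G))$ is a clean way to formalize that sketch; one minor caveat is that the block-preservation claims you attribute to the preceding paragraph are stated there only for allowable $\Delta Y$- and $\Delta P_2$-reductions, not for series, parallel, or $Y\Delta$-operations, though those remaining cases are routine to verify.
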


The last step in this sequence of series, parallel, $\Delta P_2$ reductions and allowable $\Delta Y$-transformations and $Y\Delta$-transformation is a parallel reduction. Hence we obtain the following theorem.

\begin{thm}\label{thm:planar}
Any planar block with two distinct terminals is nicely $\Delta Y$-reducible to two parallel edges where the two terminals are the ends of these edges.
\end{thm}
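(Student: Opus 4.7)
The plan is to follow the author's outline precisely: invoke Theorem~\ref{thm:planarpre} to obtain a nice $\Delta Y$-reduction from $G$ to a single edge between the two terminals, observe that the final step of any such sequence must be a parallel reduction, and then stop one step early.

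First I would verify the claim about the final step. Since the terminal graph has two vertices and one edge and every intermediate graph must be a block, each of the other allowable operations can be ruled out: a $\Delta Y$-transformation adds a new vertex; a $Y\Delta$-transformation requires three distinct neighbors of a degree-three vertex, so the penultimate graph would need at least three vertices besides $w$, contradicting that the result has two vertices; a $\Delta P_2$-reduction preserves the degree-two vertex of its triangle together with its two incident edges, so the result cannot be a single edge; and a series reduction on a degree-two vertex $v$ with terminal neighbors $a,b$ would force $G_{m-1}$ to be the path $a{-}v{-}b$ (not a block) or the triangle $avb$ (producing two parallel edges, not one). Only a parallel reduction on a class of $k \geq 2$ parallel edges between the terminals can yield a single-edge block, so $G_{m-1}$ consists of exactly $k \geq 2$ parallel edges between $a$ and $b$.

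If $k = 2$ I would simply discard the final parallel reduction and be done. If $k > 2$ a short additional argument is needed. Applying the same case analysis to $G_{m-2}\to G_{m-1}$, the only possibility is a series reduction on a degree-two vertex $v$ with neighbors $a, b$, so $G_{m-2}$ has vertex set $\{a, v, b\}$ with edges $av, vb$ and $k - 1 \geq 2$ parallel $ab$-edges. I would modify the sequence by inserting a parallel reduction at this stage, collapsing the $k - 1$ parallel $ab$-edges to a single edge and leaving the triangle $avb$, and then applying the series reduction on $v$; the result is exactly two parallel edges between $a$ and $b$.

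The main obstacle is only the bookkeeping around this modification: one checks that the triangle and the two-parallel-edge graph are blocks, that the terminals are never deleted, and that the inserted moves are themselves allowable in a nice $\Delta Y$-reduction — all of which are immediate. After this, the theorem follows as a direct corollary of Theorem~\ref{thm:planarpre}.
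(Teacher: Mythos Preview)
Your approach is exactly the paper's: invoke Theorem~\ref{thm:planarpre}, argue that the last step of the resulting sequence must be a parallel reduction, and truncate. The paper asserts the ``last step is a parallel reduction'' claim without justification; your case analysis supplying that justification is correct, and your handling of the possibility $k>2$ is more careful than the paper, which tacitly assumes $k=2$.

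One small gap remains in your $k>2$ argument: you pass to $G_{m-2}$, but if $G$ is itself a bundle of $k>2$ parallel edges between the two terminals then $m=1$ and there is no $G_{m-2}$ to analyse. With the paper's definition of parallel reduction (delete \emph{all but one} edge of a parallel class), none of the permitted moves takes such a $G$ to exactly two parallel edges, so this is really a defect in the statement/definitions rather than in your strategy; it does not affect the downstream use of the theorem, and your argument is otherwise sound and faithful to the paper's intended proof.
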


Let $G$ be a plane block with at least two edges. Any face of $G$ is enclosed by a cycle. For any face $F$, if $v$ is a vertex incident with $F$, then a series reduction on $v$ yields a face $\phi(F)$ whose enclosing cycle is obtained from the enclosing cycle of $F$ by a series reduction on $v$, and if $v$ is a vertex that is not incident with $F$, then a series reduction on $v$ yields the face $F$. Similar statements hold for series-parallel reductions and $Y\Delta$-transformations. 
If $F$ is bounded by two parallel edges, then a parallel reduction on these parallel edges will delete the face $F$. A parallel reduction on parallel edges that do not bound $F$ yields $F$.
Similarly, if $F$ is bounded by a triangle, then a $\Delta Y$-transformation on this triangle will delete $F$, and a $\Delta Y$-transformation on a different triangle yields $F$. If $F$ is not bounded by a triangle, then a $\Delta Y$-transformation yields $F$.

We say that a plane block $G$ with two distinct faces is \emph{nicely $\Delta Y$-reducible} to  a plane graph $G'$ if $G'$ can be obtained from $G$ by some sequence of series, parallel, and series-parallel reductions, and $\Delta Y$-exchanges, where the faces cannot be deleted by a reduction or transformation. 

Dualizing Theorem~\ref{thm:planar} yields the following theorem.

\begin{thm}\label{thm:dualplanar}
Any plane block with two distinct faces is $\Delta Y$-reducible to two parallel edges where the two faces correspond to the two distinct faces.
\end{thm}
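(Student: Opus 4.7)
The plan is to dualize Theorem~\ref{thm:planar}. Let $G$ be a plane block with two distinguished faces $F_1, F_2$. First I would form the planar dual $H := G^*$, which is again a plane block, and declare the two vertices $v_1, v_2$ of $H$ corresponding under the face-vertex duality to $F_1, F_2$ to be the two terminals of $H$.

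By Theorem~\ref{thm:planar} applied to $H$, there is a sequence $H = H_0, H_1, \ldots, H_k$ of plane blocks reducing $H$ to two parallel edges between $v_1$ and $v_2$, where each step $H_i \to H_{i+1}$ is a series, parallel, or $\Delta P_2$-reduction, or an allowable $\Delta Y$- or $Y\Delta$-transformation, and $v_1, v_2$ are never deleted. Setting $G_i := H_i^*$, each $G_i$ is a plane block with $G_0 = G$, and the two faces of $G_i$ corresponding under duality to $v_1, v_2$ are exactly $F_1, F_2$ throughout the sequence. Using the primal/dual correspondence explicitly recorded in the paragraphs preceding Theorem~\ref{thm:planarpre}---series $\leftrightarrow$ parallel, face-bounding parallel $\leftrightarrow$ series, $\Delta P_2 \leftrightarrow$ series-parallel, allowable $\Delta Y \leftrightarrow Y\Delta$, and $Y\Delta \leftrightarrow \Delta Y$---each transition $G_i \to G_{i+1}$ is one of the operations permitted in the definition of nice $\Delta Y$-reducibility for a plane block with two distinguished faces. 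Finally, since $H_k$ consists of two parallel edges between $v_1$ and $v_2$, the dual $G_k$ consists of two parallel edges whose two faces correspond to $F_1, F_2$, which is what the theorem asserts.

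The main obstacle is verifying that the dictionary applies cleanly at every step of the sequence. Two potentially awkward configurations must be avoided on the $H$ side: a parallel reduction on a pair of parallel edges that fail to bound a face of $H_i$ (which would not dualize to a series reduction), and a parallel reduction on a two-edge block (which would dualize to creating a loop rather than a series reduction). Both are ruled out by the specific sequence furnished by Theorem~\ref{thm:planar}: it is built to use only face-bounding parallel reductions in each plane block, and it terminates at two parallel edges, so the degenerate two-edge parallel reduction is never executed. Preservation of the block property, and hence of planarity of each $G_i$, is likewise inherited directly from the corresponding property of the $H_i$.
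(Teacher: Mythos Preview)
Your proposal is correct and is exactly the paper's approach: the paper's entire proof is the single sentence ``Dualizing Theorem~\ref{thm:planar} yields the following theorem,'' and you have simply spelled out that dualization in detail, using the very operation-by-operation dictionary the paper records just before Theorem~\ref{thm:planarpre}. Your discussion of the two potentially awkward configurations (non-face-bounding parallel pairs and the two-edge block) is a welcome elaboration of points the paper leaves implicit.
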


Let $(H,\Omega)$ be a signed graph in which $H$ is a plane graph. We call a face of $(H,\Omega)$ odd if the enclosing cycle is odd, and call it even if the enclosing cycle is even. 


Let $(G,\Sigma)$ be a signed graph. If $v$ is a vertex of $(G,\Sigma)$ of degree two which has two neighbors, then a \emph{series reduction on $v$} in $(G,\Sigma)$ means that we delete $v$, connect the two neighbors of $v$ by an even edge if both edges incident to $v$ are odd, connect the two neighbors of $v$ by an odd edge if exactly one of the edges incident to $v$ is odd, and connect the two neighbors of $v$ by an even edge if both edges incident to $v$ are even. 
If two parallel edges in $(G,\Sigma)$ bound an even face $F$, then a \emph{parallel reduction on $F$} means that we delete one edge from these two parallel edges. Suppose $e$ and $f$ are parallel edges of the signed graph $(G,\Sigma)$ and $e\in \Sigma$ and $f\not\in \Sigma$. If an end, $v$, of $e$ is incident to exactly one edge $g\not\in \{e,f\}$, then by a \emph{parallel-series reduction} on $v$ in a signed graph we mean that we contract $g$. Notice that if $(G,\Sigma)$ is a plane graph with two odd faces, then a parallel-series reduction yields a plane graph with two odd faces. 
If $v$ is a vertex of $(G,\Sigma)$ of degree three which has three neighbors, then a \emph{$Y\Delta$-transformation (on $v$)} in $(G,\Sigma)$ means that we delete $v$ and for each pair of distinct neighbors, add an odd (even) edge if the path of length two through $v$ between this pair of neighbors is odd (even). 
If $T$ is an even triangle in $(G,\Sigma)$, then 
$T$ has either no or exactly two odd edges. If $T$ has no odd edges, then a $\Delta Y$-transformation on $T$ means that we remove the edges of $T$, add a new vertex $v$, and for each vertex $w$ in $T$ add an even edge between $v$ and $w$. If $T$ has exactly two odd edges, let $w_1$ be the vertex of $T$ that is incident to two odd edges, and let $w_2$ and $w_3$ be the other vertices of $T$. In this case, a $\Delta Y$-transformation on $T$  means that we remove the edges of $T$, add a new vertex $v$, and add an odd edge between $w_1$ and $v$, and even edges between $w_2$ and $v$ and between $w_3$ and $v$. We do not allow a $\Delta Y$-transformation on an odd triangle.

We say that a plane block $(G,\Sigma)$ with two odd faces is \emph{nicely $\Delta Y$-reducible} to  a plane block $(G',\Sigma')$ if $(G',\Sigma')$ can be obtained from $(G,\Sigma)$ by some sequence of series, parallel, and series-parallel reductions, and $\Delta Y$-exchanges. 

If we make the two odd faces the distinct two faces, then Theorem~\ref{thm:dualplanar} becomes our main result of this section.

\begin{thm}\label{thm:planarreducible}
Let $(G,\Sigma)$ be a plane block with two odd faces. Then $(G,\Sigma)$ is nicely $\Delta Y$-reducible to  $K_2^=$. 
\end{thm}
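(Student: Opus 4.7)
The plan is to apply Theorem~\ref{thm:dualplanar} to the underlying plane block $G$, taking the two odd faces of $(G,\Sigma)$ as the two distinct faces. This yields a finite sequence of unsigned operations---series, parallel, and series-parallel reductions, together with $\Delta Y$- and $Y\Delta$-transformations---reducing $G$ to two parallel edges whose two bounded faces correspond to the original two odd faces. The strategy is to lift this sequence to the signed setting; at the end, the two surviving parallel edges bound two odd faces, which forces them to have opposite signs, so the final signed graph is $K_2^=$.

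The key invariant that makes the lifting work is that throughout the reduction, the only odd faces are the two terminal ones; this follows from the hypothesis that $(G,\Sigma)$ has exactly two odd faces, together with the fact that all our operations preserve face parities. Consequently every non-terminal face is even, which gives the following: parallel reductions act on non-terminal (even) 2-faces, whose two bounding edges therefore have the same sign, so the signed parallel reduction applies; $\Delta Y$-transformations are performed on triangles bounding non-terminal faces, which are thus even triangles, so the signed $\Delta Y$ applies; series reductions and $Y\Delta$-transformations transfer without constraint; and series-parallel reductions on a 2-face that is itself a terminal (odd) have two parallel edges of opposite signs, so the signed parallel-series reduction applies directly.

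The only case requiring modification is a series-parallel reduction whose 2-face is non-terminal (hence even): the two parallel edges then have the same sign, so the signed parallel-series reduction is inapplicable. We resolve this by replacing the single step with two signed operations: a signed parallel reduction deleting one of the two parallel edges (valid since the 2-face is even), followed by a signed series reduction on the now-degree-two vertex. The resulting graph is precisely what one obtains from the unsigned series-parallel step followed by an additional, perfectly valid parallel reduction on the pair of parallel edges just created; in particular, it is still a plane block with the same two odd faces, and it has strictly fewer edges than before.

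The main obstacle is this non-terminal series-parallel case, which we handle formally by induction on $|E(G)|$. The base case $|E(G)| = 2$ forces $(G, \Sigma) = K_2^=$, because two parallel edges bounding two odd faces must have opposite signs. In the inductive step, Theorem~\ref{thm:dualplanar} provides an unsigned sequence whose first edge-reducing operation can be executed in the signed setting exactly as outlined above (preceded, if necessary, by any $\Delta Y$- or $Y\Delta$-transformations occurring earlier in the sequence, which transfer directly), producing a plane block with two odd faces and strictly fewer edges, to which the inductive hypothesis applies.
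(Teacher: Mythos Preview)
Your proof is correct and follows the same approach as the paper's: take the two odd faces as the distinguished faces in Theorem~\ref{thm:dualplanar} and lift the resulting unsigned reduction sequence to the signed setting. The paper's own proof is a one-liner (``If we make the two odd faces the distinct two faces, then Theorem~\ref{thm:dualplanar} becomes our main result of this section''), relying on the definitions of the signed operations having been set up precisely so that each unsigned move in Theorem~\ref{thm:dualplanar} corresponds to an admissible signed move.

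Where you go beyond the paper is in explicitly verifying that correspondence operation by operation, and in isolating the one place where it is not entirely automatic: a series-parallel reduction whose digon face is non-terminal (hence even). The paper's signed \emph{parallel-series} reduction is defined only when the two parallel edges have opposite signs, so this case is not literally covered by the definitions as written (the paper's terminology is also inconsistent here, writing ``series-parallel'' in the signed definition of nicely $\Delta Y$-reducible while having earlier defined only ``parallel-series''). Your replacement of this step by a signed parallel reduction followed by a signed series reduction, together with the induction on $|E(G)|$ to restart Theorem~\ref{thm:dualplanar} on the smaller graph, is a clean way to close this gap without having to track how the modification interacts with the remainder of the unsigned sequence. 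So: same idea as the paper, but with a genuine increment in rigor.
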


\section{Splits, reductions, and $\Delta Y$-exchanges in signed graphs}\label{sec:nusplits}

In this section, we show that $\nu(G',\Sigma') = \nu(G,\Sigma)$ if $(G',\Sigma')$ is obtained from $(G,\Sigma)$ by a series, parallel, or a series-parallel reduction, or a $\Delta Y$-exchange. Then we obtain formulas which allow to calculate $\nu(G,\Sigma)$ from the values of $\nu$ on the parts of a $0$-, $1$-, $2$-, or $3$-split of $(G,\Sigma)$. 

A proof of the following lemma can be found in \cite{AraHalLivdH2012}.
\begin{lemma}\label{lem:smnull1}
Let $(G,\Sigma)$ be a connected bipartite signed graph and let $A\in S(G,\Sigma)$ be positive semidefinite. If $x\in \ker(A)$ is nonzero, then $x$ has only nonzero entries. Furthermore, $\nullity(A)\leq 1$.
\end{lemma}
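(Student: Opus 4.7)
The plan is to reduce to the easy case where all edges are even, then deploy the standard M-matrix-style argument. Since $(G,\Sigma)$ is bipartite it has no odd cycles, so by Zaslavsky's theorem it is sign-equivalent to $(G,\emptyset)$. Sign-equivalence is implemented by conjugation $A \mapsto DAD$ with $D$ diagonal $\pm 1$, which preserves nullity, the PSD property, and the set of zero entries of kernel vectors (since $Dv$ just flips signs). So I may assume $\Sigma = \emptyset$, meaning $a_{ij} < 0$ whenever $i \sim j$ in $G$ and $a_{ij} = 0$ when $i \ne j$ are non-adjacent.

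The key trick is to pass to the entrywise absolute value. Given $x \in \ker A$, consider $|x|$. Using that $a_{ij} \le 0$ for $i \ne j$ and $|x_i||x_j| \ge x_i x_j$, one gets
\[
|x|^T A |x| = \sum_i a_{ii} x_i^2 + 2 \sum_{i<j} a_{ij} |x_i||x_j| \le \sum_i a_{ii} x_i^2 + 2 \sum_{i<j} a_{ij} x_i x_j = x^T A x = 0.
\]
Since $A$ is PSD this forces $|x|^T A |x| = 0$, hence $|x| \in \ker A$. Moreover, equality in the inequality forces $|x_i||x_j| = x_i x_j$ for every adjacent pair $i \sim j$, i.e.\ $x_i$ and $x_j$ have the same sign whenever one of them is nonzero.

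Now suppose for contradiction that $x_i = 0$ for some $i$. Apply $A$ to the non-negative kernel vector $|x|$: the $i$-th coordinate of $A|x|$ is $a_{ii} \cdot 0 + \sum_{j \sim i} a_{ij}|x_j| = 0$. Every summand on the right is $\le 0$ (with $a_{ij} < 0$ strictly for $j \sim i$), so $|x_j| = 0$ for every neighbor $j$ of $i$. Iterating and using connectedness of $G$ forces $x = 0$, contradicting our assumption. Therefore every entry of $x$ is nonzero, giving the first claim.

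For the nullity bound, suppose $\nullity(A) \ge 2$ and pick linearly independent $x, y \in \ker A$. Since the first entries $x_1, y_1$ are both nonzero, the vector $z = y_1 x - x_1 y$ lies in $\ker A$, is nonzero by linear independence, and has $z_1 = 0$. This contradicts the first part of the lemma applied to $z$. Hence $\nullity(A) \le 1$. The main obstacle is organizing the two inequalities (the $|x|$ comparison and the sign-coincidence it forces) cleanly; everything else is routine once that step is in place.
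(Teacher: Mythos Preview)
Your proof is correct. The paper does not actually prove this lemma in the text; it simply cites \cite{AraHalLivdH2012} for a proof, so there is no in-paper argument to compare against. Your approach---re-sign to make all edges even (valid since a bipartite signed graph has no odd cycles and is thus sign-equivalent to $(G,\emptyset)$), then run the standard Perron--Frobenius/M-matrix argument via $|x|^T A |x| \le x^T A x$---is the natural one and is almost certainly what the cited paper does as well. One minor remark: the sign-coincidence observation (``$x_i$ and $x_j$ have the same sign whenever one is nonzero'') is correct but not actually needed for the rest of your argument, since you work with $|x|$ directly; you can drop it without loss.
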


For a proof of the following lemma, see \cite{MR2517585}.
\begin{lemma}\label{lem:Mmatrix}
If $A$ is a positive definite matrix whose off-diagonal entries are nonpositive, then each entries of $A^{-1}$ is nonnegative.
\end{lemma}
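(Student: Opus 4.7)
My plan is to reduce to the case of a matrix with unit diagonal via a diagonal congruence, rewrite the rescaled matrix as $I-B$ with $B$ symmetric and entrywise nonnegative, and then invert using the Neumann series $(I-B)^{-1}=\sum_{k\ge 0}B^k$, whose summands are manifestly entrywise nonnegative.

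First I would observe that since $A$ is positive definite we have $a_{ii}>0$ for every $i$, so $D:=\operatorname{diag}(a_{11},\ldots,a_{nn})$ is invertible and has a well-defined positive square root. The congruence $\tilde A := D^{-1/2} A D^{-1/2}$ preserves positive definiteness and the sign pattern of the off-diagonal entries while forcing unit diagonal, and the identity $A^{-1} = D^{-1/2}\tilde A^{-1} D^{-1/2}$ shows that entrywise nonnegativity of $\tilde A^{-1}$ transfers to $A^{-1}$ since $D^{-1/2}$ is itself entrywise nonnegative. So it suffices to treat the case of unit diagonal; writing $A = I - B$ then yields a symmetric $B$ with zero diagonal and nonnegative off-diagonal entries.

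Because $A$ is positive definite its smallest eigenvalue is strictly positive, whence the largest eigenvalue of $B$ satisfies $\lambda_{\max}(B) = 1 - \lambda_{\min}(A) < 1$. By the Perron--Frobenius theorem, the spectral radius $\rho(B)$ of the nonnegative matrix $B$ is itself an eigenvalue of $B$; combined with the symmetry of $B$ (which forces all eigenvalues to be real) this gives $\rho(B) = \lambda_{\max}(B) < 1$. Consequently the Neumann series
\begin{equation*}
A^{-1} = (I-B)^{-1} = \sum_{k=0}^{\infty} B^k
\end{equation*}
converges. Each $B^k$ is a product of matrices with nonnegative entries and hence is itself entrywise nonnegative, so $A^{-1}$ is entrywise nonnegative.

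The only place where anything subtle happens is the identification $\rho(B)=\lambda_{\max}(B)$, which uses simultaneously the nonnegativity of $B$ (to invoke Perron--Frobenius and place $\rho(B)$ in the spectrum) and the symmetry of $B$ (to ensure the spectrum is real); once that is settled, both Neumann-series convergence and the entrywise sign check are routine. An alternative that avoids Perron--Frobenius would be induction on $n$: the Schur complement $C - bb^{\top}/a_{11}$ of the top-left entry is again positive definite with nonpositive off-diagonals (since $b_i b_j/a_{11}\ge 0$), so the inductive hypothesis and the $2\times 2$ block-inverse formula yield the claim, but the Neumann-series route is cleaner.
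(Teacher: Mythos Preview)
Your argument is correct. The normalization to unit diagonal is fine (positive definiteness gives $a_{ii}>0$, and $A^{-1}=D^{-1/2}\tilde A^{-1}D^{-1/2}$ transfers the sign conclusion), and the key step $\rho(B)<1$ is justified: Perron--Frobenius places $\rho(B)\ge 0$ in the spectrum of the nonnegative matrix $B$, symmetry makes the spectrum real, so $\rho(B)\le \lambda_{\max}(B)=1-\lambda_{\min}(A)<1$, and the Neumann series does the rest.

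As for comparison with the paper: the paper does not actually prove this lemma but simply cites an external reference (the result is a standard fact about symmetric $M$-matrices / Stieltjes matrices). Your Neumann-series route is one of the classical proofs of this fact; the Schur-complement induction you sketch at the end is the other standard one, and it has the mild advantage of not invoking Perron--Frobenius. Either would be an acceptable replacement for the citation.
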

%

Let $(G,\Sigma)$ be a signed graph, and let $G_1$ and $G_2$ be subgraphs of $G$. Let $\Sigma_i = \Sigma\cap E(G_i)$ for $i=1,2$. We call the pair $[(G_1,\Sigma_1),(G_2,\Sigma_2)]$ a \emph{$k$-partition} if $G_1\cup G_2 = G$, $E(G_1)\cap E(G_2)=\emptyset$, and $k=\lvert V(G_1)\cap V(G_2)\rvert$.

\begin{lemma}\label{lem:reduction2clique}
Let $(G,\Sigma)$ be a signed graph and let $[(G_1,\Sigma_1),(G_2,\Sigma_2)]$ be a $k$-partition of $(G,\Sigma)$. Suppose $(G_2,\Sigma_2)$ is bipartite and connected, and let $(H,\Omega)$ be obtained from $(G_1,\Sigma_1)$ by adding between each pair of distinct vertices $u,v$ of $V(G_1)\cap V(G_2)$ an even (odd) edge if there is an even (odd) path in $(G_2,\Sigma_2)$ between $u$ and $v$. Then $M_+(H,\Omega)\geq M_+(G,\Sigma)$ and $\nu(H,\Omega)\geq \nu(G,\Sigma)$.
\end{lemma}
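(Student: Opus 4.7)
The plan is to start from a PSD matrix $A\in S(G,\Sigma)$ realizing $M_+(G,\Sigma)$ (or $\nu(G,\Sigma)$ with the SAP), and to eliminate the vertices of $V_3:=V(G_2)\setminus V(G_1)$ from $A$ one at a time via Schur complement, checking at each step that the resulting matrix lies in the signed graph obtained by the corresponding $Y\Delta$-style transformation. The key preparatory move will be a normalization: because $(G_2,\Sigma_2)$ is bipartite and connected, Zaslavsky's theorem provides a set $U\subseteq V(G_2)$ whose re-signing produces $\Sigma_2=\emptyset$. Re-signing is a sign-equivalence both of $(G,\Sigma)$ and of $(H,\Omega)$, so it leaves $M_+$ and $\nu$ on each side unchanged; after it, every edge of $G_2$---in particular, every edge of $G$ incident to a vertex of $V_3$---will be even.

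I will then pick any $w\in V_3$ (assuming $V_A:=V(G_1)\cap V(G_2)$ is nonempty; otherwise $H=G_1$ and the claim is immediate). The connectedness of $(G_2,\Sigma_2)$ together with $|V_2|\geq 2$ guarantees that $w$ has a neighbor $u$ in $G_2$; the edge $uw$ is even, so $A_{uw}<0$, and positive semidefiniteness forces $A_{ww}>0$. I form the Schur complement
\[
A'\;=\;A|_{V\setminus\{w\}}\;-\;A_{ww}^{-1}\,a_w a_w^{T},
\]
which is PSD with $\nullity(A')=\nullity(A)$. For $u,v\neq w$ the update reads $A'_{uv}=A_{uv}-A_{uw}A_{wv}/A_{ww}$; if $u$ and $v$ are both neighbors of $w$ then the correction is strictly negative, and otherwise it vanishes. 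A case analysis over the four sign patterns of $A_{uv}$ (no edge, only even, only odd, mixed) will show that $A'\in S(G',\Sigma')$, where $(G',\Sigma')$ is obtained from $(G,\Sigma)$ by deleting $w$ and adding an even edge between every pair of its former neighbors. I then check that $(G_2',\Sigma_2')$ remains connected (the new edges re-glue the neighborhood of $w$) and bipartite (every cycle in $G_2'$ lifts through $w$ to an even cycle in $G_2$), so the induction hypothesis persists and the step can be iterated through all of $V_3$.

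The output will be a PSD matrix $B$ on $V_1$ with $\nullity(B)\geq \nullity(A)$. Because only even edges are added and $(G_2,\Sigma_2)$ is connected, every pair $u,v\in V_A$ is finally joined by an added even edge (a $u$-$v$ path in $G_2$ collapsing into a single edge under the successive eliminations), and since $S(\cdot,\cdot)$ does not distinguish parallel edges of the same parity, $B\in S(H,\Omega)$, which yields $M_+(H,\Omega)\geq M_+(G,\Sigma)$. For the $\nu$-statement I also need that each Schur step preserves the SAP: given a putative witness $X'$ for the failure of SAP of $A'$, I would define $X$ on $V$ by $X|_{V\setminus\{w\}}=X'$, $X_{ww}=0$, and $X_{wv}:=-A_{ww}^{-1}(a_w^{T}X')_v$ for $v\neq w$; the requirement $X_{wv}=0$ for $v$ adjacent to $w$ in $G$ is automatic because every neighbor $k$ of $w$ in $G$ is adjacent to $v$ in $G'$ (by the new star-to-clique edges), hence $X'_{kv}=0$, and the identity $A|_{V\setminus\{w\}}=A'+A_{ww}^{-1}a_wa_w^{T}$ then gives $AX=0$, contradicting the SAP of $A$. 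Iterating, $B$ has the SAP, so $\nu(H,\Omega)\geq \nu(G,\Sigma)$. The hard part will be the uniform sign bookkeeping across the successive Schur eliminations; the first-step re-signing is the crucial ingredient, because it forces every pivot to interact with its neighborhood through a uniformly negative sign pattern, which is ultimately what makes the whole argument go through.
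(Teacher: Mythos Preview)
Your approach is essentially the paper's: eliminate $C=V(G_2)\setminus V(G_1)$ by Schur complement, track the resulting sign pattern, and lift any SAP witness back to $A$. The packaging differs: the paper takes the Schur complement of $A[C]$ in one block, invoking Lemma~\ref{lem:smnull1} to certify that $A[C]$ is positive definite and Lemma~\ref{lem:Mmatrix} (nonnegativity of the inverse of an $M$-matrix) to control the off-diagonal signs of $B$, whereas you pivot one vertex at a time and check signs and SAP by hand. Your choice to re-sign on all of $V(G_2)$ rather than only on $C$ is a harmless strengthening that makes the bookkeeping cleaner; the SAP lift you describe is the single-pivot version of the paper's block construction.

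One step does need correction. Your claim that after all eliminations ``every pair $u,v\in V_A$ is finally joined by an added even edge'' fails whenever every $u$--$v$ path in $G_2$ passes through another vertex of $V_A$: take $G_2$ to be the path with vertex sequence $a,x,b,y,c$, where $V_A=\{a,b,c\}$ and $V_3=\{x,y\}$; eliminating $x$ and $y$ produces even edges $ab$ and $bc$ but never $ac$, and indeed the final $B$ has $B_{ac}=0$. Thus $B$ lands only in $S(H',\Omega')$ for some spanning subgraph $(H',\Omega')$ of $(H,\Omega)$. For the $\nu$-inequality this is immaterial---minor-monotonicity gives $\nu(H,\Omega)\ge\nu(H',\Omega')\ge\nullity(B)$---but you should make that step explicit rather than assert $B\in S(H,\Omega)$. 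The paper's one-line ``From Lemma~\ref{lem:Mmatrix} it follows that $B\in S(H,\Omega)$'' glosses over the same point.
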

\begin{proof}
Let $S = V(G_1)\cap V(G_2)$, $D=V(G_1)\setminus S$, and $C=V(G_2)\setminus S$.  As $(G_2,\Sigma_2)$ is bipartite, we can re-sign on $U\subseteq C$ such that each edge in $G[C]$ is even. 

Let $A\in S(G,\Sigma)$ be positive semidefinite.
Suppose for a contradiction that $\nullity A[C]\geq 1$; let $x\in \ker(A[C])$ be nonzero. Let $y$ be the vector on $V(G_2)$ defined by $y[C] = x$ and $y_s=0$ if $s\in V(G_2)\setminus V(G_1)$.
Since $y^T A[V(G_2)] y = 0$, $y \in \ker(A[V(G_2)])$. Since $(G_2, \Sigma_2)$ is bipartite and connected, this contradicts  Lemma~\ref{lem:smnull1}. Hence $A[C]$ is positive definite.

The Schur complement of $A[C]$ in $A$ is
\begin{equation*}
B = \begin{bmatrix}
A[D] & A[D,S]\\
A[S,D] & A[S]-A[S,C] A[C]^{-1} A[C,S]
\end{bmatrix}.
\end{equation*}
From Lemma~\ref{lem:Mmatrix} it follows that $B\in S(H,\Omega)$.
Since $A[C]$ is positive definite,  $B$ is positive semidefinite and $\nullity B = \nullity A$. Hence $M_+(H,\Omega)\geq M_+(G,\Sigma)$.

Suppose $A$ has the SAP. Let $Y=[y_{i,j}]$ be a symmetric matrix with $y_{i,j} = 0$ if $i$ and $j$ are adjacent in $H$ or if $i=j$, and such that $B Y = 0$. Let 
$Z = -A[C]^{-1}A[C,S] Y[S,D]$ and
\begin{equation*}
X = [x_{i,j}] = \begin{bmatrix}
Y[D] & Y[D,S] & Z^T\\
Y[S,D] & 0 & 0\\
Z & 0 & 0 
\end{bmatrix}.
\end{equation*}
Then $x_{i,j} = 0$ if $i$ and $j$ are adjacent in $G$ or if $i=j$, and $A X = 0$. Since $A$ has the SAP, $X=0$. Hence $Y=0$, which means that $B$ has the SAP. Hence $\nu(H,\Omega)\geq \nu(G, \Sigma)$.
\end{proof}

From Theorem~\ref{lem:reduction2clique}, we immediately obtain the following lemma.
\begin{lemma}\label{lem:YDelta}
Let $(G,\Sigma)$ be a signed graph. If $(G',\Sigma')$ is obtained from $(G,\Sigma)$ by applying a $Y\Delta$-transformation, then $M_+(G,\Sigma) \leq M_+(G',\Sigma')$ and $\nu(G,\Sigma)\leq \nu(G',\Sigma')$.
\end{lemma}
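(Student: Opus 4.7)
The plan is to apply Lemma~\ref{lem:reduction2clique} directly to a carefully chosen $3$-partition that realizes the $Y\Delta$-transformation. Let $v$ be the degree-three vertex with three distinct neighbors $w_1,w_2,w_3$ on which the $Y\Delta$-transformation is performed. I define $G_2$ to be the star with vertex set $\{v,w_1,w_2,w_3\}$ and edge set $\{vw_1,vw_2,vw_3\}$, with signs inherited from $\Sigma$, and $G_1$ to be the subgraph of $G$ on vertex set $V(G)\setminus\{v\}$ and edge set $E(G)\setminus E(G_2)$. Then $G_1\cup G_2=G$, $E(G_1)\cap E(G_2)=\emptyset$, and $V(G_1)\cap V(G_2)=\{w_1,w_2,w_3\}$, so $[(G_1,\Sigma_1),(G_2,\Sigma_2)]$ is a $3$-partition of $(G,\Sigma)$. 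Since $G_2$ is a star, $(G_2,\Sigma_2)$ is connected and bipartite, so the hypotheses of Lemma~\ref{lem:reduction2clique} are met.

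Next, I would identify the signed graph $(H,\Omega)$ produced by Lemma~\ref{lem:reduction2clique} with $(G',\Sigma')$. For distinct $i,j\in\{1,2,3\}$, the only path in $G_2$ between $w_i$ and $w_j$ is the length-two path $w_i\,v\,w_j$, whose parity is determined by the signs of $vw_i$ and $vw_j$. Comparing with the paper's definition of the $Y\Delta$-transformation, the edge added in $(G',\Sigma')$ between $w_i$ and $w_j$ is odd (resp.\ even) exactly when this path is odd (resp.\ even). Hence the graph $(H,\Omega)$ obtained from $(G_1,\Sigma_1)$ by adding, for each pair $\{w_i,w_j\}$, an edge whose parity matches that of the unique $w_i$--$w_j$ path in $(G_2,\Sigma_2)$ coincides with $(G',\Sigma')$.

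Applying Lemma~\ref{lem:reduction2clique} then gives $M_+(G',\Sigma')=M_+(H,\Omega)\geq M_+(G,\Sigma)$ and $\nu(G',\Sigma')=\nu(H,\Omega)\geq \nu(G,\Sigma)$, which is the desired conclusion. There is no genuine obstacle: the only point requiring attention is the case-check matching the sign convention of the $Y\Delta$-transformation (even/odd paths of length two through $v$) with the sign rule used in Lemma~\ref{lem:reduction2clique}, and this is an elementary verification across the three cases (both incident edges even, both odd, or one of each).
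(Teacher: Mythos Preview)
Your proof is correct and is exactly the approach the paper takes: the paper states that the lemma follows immediately from Lemma~\ref{lem:reduction2clique}, and your argument spells out precisely the $3$-partition (with $G_2$ the star centered at $v$) that makes this immediate. The sign-matching verification you describe is the only detail to check, and it is straightforward.
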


The following lemma deals with $\Delta Y$-transformations. Recall that we allow a $\Delta Y$-transformation only on an even triangle.

\begin{lemma}\label{lem:DeltaY}
Let $(G,\Sigma)$ be a signed graph. If $(G',\Sigma')$ is a signed graph obtained from $(G,\Sigma)$ by applying a $\Delta Y$-transformation, then $M_+(G,\Sigma) \leq  M_+(G',\Sigma')$ and $\nu(G,\Sigma)\leq \nu(G',\Sigma')$.
\end{lemma}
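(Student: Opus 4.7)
My plan is to invert the Schur-complement construction used in the proof of Lemma~\ref{lem:reduction2clique}: given a positive semidefinite $A\in S(G,\Sigma)$, I will adjoin a new row and column indexed by the apex vertex $v$ of the $\Delta Y$-transformation to produce a positive semidefinite matrix $A'\in S(G',\Sigma')$ of the same nullity. First I would re-sign $(G,\Sigma)$ (and mirror the re-signing in $(G',\Sigma')$) so that the triangle $T=w_1w_2w_3$ is entirely even; the $\Delta Y$-rule then makes the three new edges $vw_i$ of $(G',\Sigma')$ even as well. Choosing $\alpha>0$ and $\gamma_1,\gamma_2,\gamma_3<0$, let $c\in\mathbb{R}^{V(G)}$ have $c[w_i]=\gamma_i$ and vanish elsewhere, and set
\[
A' = \begin{bmatrix} A+cc^T/\alpha & c\\ c^T & \alpha\end{bmatrix}.
\]
The Schur complement of the $(v,v)$-entry is exactly $A$, so $A'$ is automatically positive semidefinite with $\nullity(A')=\nullity(A)$.

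The real work is to choose $\alpha$ and the $\gamma_i$ so that $A'\in S(G',\Sigma')$. Outside the $W\times W$ block and the $v$-row/column (with $W=\{w_1,w_2,w_3\}$), $A'$ agrees with $A$; $A'[v,w_i]=\gamma_i<0$ matches the even edge $vw_i$; $A'[v,x]=0$ for $x\notin W\cup\{v\}$ matches the non-adjacency of $v$ to $x$ in $G'$; and $A'[v,v]=\alpha$ is unconstrained. The only delicate entries are $A'[w_i,w_j]=A[w_i,w_j]+\gamma_i\gamma_j/\alpha$ for $i\neq j$. Setting $q_{ij}=\gamma_i\gamma_j/\alpha>0$ and classifying the pair $(w_i,w_j)$ by the non-triangle edges between them in the re-signed $G$, the $(G',\Sigma')$-sign requirement translates to: no such edges forces $q_{ij}=-A[w_i,w_j]$ (automatically positive, since $A[w_i,w_j]$ then comes from the single even triangle edge); only even edges forces $q_{ij}\in(0,-A[w_i,w_j])$; only odd edges forces $q_{ij}>\max(0,-A[w_i,w_j])$; both parities leaves $q_{ij}>0$ free. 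Each of these sets is a non-empty subset of $(0,\infty)$, the three pair-constraints are independent, so I can pick valid $q_{ij}$ and then recover $\gamma_i=-\sqrt{q_{ij}q_{ik}/q_{jk}}$ (with $\{i,j,k\}=\{1,2,3\}$) at $\alpha=1$. This already yields $M_+(G',\Sigma')\geq M_+(G,\Sigma)$.

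For the SAP upgrade giving $\nu(G',\Sigma')\geq\nu(G,\Sigma)$ I would assume $A$ also has the SAP and let $Y=\begin{bmatrix}Y_1 & y_2\\ y_2^T & z\end{bmatrix}$ be any symmetric matrix with the $(G',\Sigma')$-SAP zero pattern satisfying $A'Y=0$. A block computation, combined with $y_2[w_i]=0$ (forced because $vw_i$ is an edge of $G'$), yields $AY_1=0$, $y_2=-Y_1c/\alpha$, $z=c^TY_1c/\alpha^2$, and the key relation $Y_1[W]\gamma=0$ where $\gamma=(\gamma_1,\gamma_2,\gamma_3)^T$. The only entries of $Y_1$ not immediately zero from the $(G,\Sigma)$-SAP pattern are the off-diagonal entries of $Y_1[W]$ corresponding to triangle pairs that become non-adjacent in $G'$; combined with the vanishing of the remaining off-diagonal entries of $Y_1[W]$ (from $(G',\Sigma')$-SAP) and the zero diagonal, the system $Y_1[W]\gamma=0$ becomes a linear system on at most three unknowns whose coefficient matrix has nonzero determinant --- in the extreme subcase with all three triangle pairs free, the determinant is $-2\gamma_1\gamma_2\gamma_3\neq 0$. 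Hence $Y_1[W]=0$, so $Y_1$ satisfies the full $(G,\Sigma)$-SAP pattern, SAP for $A$ forces $Y_1=0$, and then $y_2=0$, $z=0$, and $Y=0$. The main obstacle is the equality constraint in the ``no surviving non-triangle edge'' case: it removes all slack in the choice of $q_{ij}$, but it is both compatible with the strict inequalities imposed by the other pairs (because the three $q_{ij}$'s are independent) and produces a nondegenerate $Y_1[W]\gamma=0$ system in the SAP argument.
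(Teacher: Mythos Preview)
Your proposal is correct and is essentially the same construction as the paper's: the paper also adjoins a single new row and column (with $(v,v)$-entry $1$ and $(v,w_i)$-entries $b,c,d$) so that the Schur complement at the new vertex returns $A$, the freedom in the paper's splitting $A[S]=K+T$ is exactly your freedom in choosing the $q_{ij}$, and the paper's unstated justification for ``since $BY=0$, $y_{v_1,v_2}=y_{v_1,v_3}=y_{v_2,v_3}=0$'' is precisely your $3\times 3$ system $Y_1[W]\gamma=0$ with determinant $-2\gamma_1\gamma_2\gamma_3\neq 0$. Your upfront re-signing to make the triangle entirely even and your explicit case analysis on the non-triangle edges between the $w_i$ are cosmetic conveniences, not a different route.
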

\begin{proof}
Let $A\in S(G,\Sigma)$ be a positive semidefinite matrix. Let $S=\{v_1,v_2,v_3\}$ be the vertices of the triangle $\Delta$ on which we apply the $\Delta Y$-transformation and let $C=V(G)\setminus S$. We can write $A$ as
\begin{equation*}
\begin{bmatrix}
K+T & A[S,C]\\
A[C,S] & A[C]
\end{bmatrix},
\end{equation*}
where $t_{i,j} < 0$ ($t_{i,j} > 0$) if the edge of the triangle $\Delta$ connecting $i$ and $j$ is even (odd), and where $k_{i,j} < 0$ if the other edges of $(G,\Sigma)$ connecting $i$ and $j$ are all even, $k_{i,j}>0$ if the other edges of $(G,\Sigma)$ connecting $i$ and $j$ are all odd, $k_{i,j}\in \mathbb{R}$ if the other edges of $(G,\Sigma)$ connecting $i$ and $j$ include both even and odd edges, and $k_{i,j}=0$ if there are no other edges of $(G,\Sigma)$ connecting $i$ and $j$.

We can find real numbers $b,c,d$ such that $t_{v_1,v_2} = -bc$, $t_{v_1,v_3} = -bd$, and $t_{v_2,v_3} = -cd$. To see this, notice that, as $\Delta$ is an even triangle, $t_{v_1,v_2} t_{v_1,v_3} t_{v_2,v_3} < 0$, and so $t_{v_1,v_2} t_{v_2,v_3}/t_{v_1,v_3}< 0$. Hence 
\begin{equation*}
c = \sqrt{-t_{v_1,v_2} t_{v_2,v_3}/t_{v_1,v_3}}
\end{equation*} 
is a well-defined real number.
The real numbers $b$ and $d$ can be found similarly.
Let
\begin{equation*}
B = [b_{i,j}] \begin{bmatrix}
1 & b & c & d & 0\\
b & t_{v_1,v_1}+b^2 & k_{v_1,v_2} & k_{v_1,v_3} & A[v_1,C]\\
c & k_{v_2,v_1} & t_{v_2,v_2}+c^2 & k_{v_2,v_3} & A[v_2,C]\\
d & k_{v_3,v_1} & k_{v_3,v_2} & t_{v_3,v_3}+d^2 & A[v_3,C]\\
0 & A[C,v_1] & A[C,v_2] & A[C,v_3] & A[C]
\end{bmatrix}.
\end{equation*}
Since $B/B[1] = A$ and $b_{1,1}>0$, $B$ is positive semidefinite and $\nullity{B}=\nullity{A}$. Furthermore, since $B\in S(G',\Sigma')$, $M_+(G',\Sigma')\geq M_+(G,\Sigma)$.

Suppose $A$ has the SAP. Let $Y = [y_{i,j}]$ be a symmetric matrix with $y_{i,j} = 0$ if $i$ and $j$ are adjacent or if $i=j$, and such that $BY=0$. We can write $Y$ as
\begin{equation*}
\begin{bmatrix}
0 & 0 & 0 & 0 & Y[v_0,C]\\
0 & 0 & y_{v_1,v_2} & y_{v_1,v_3} & Y[v_1,C]\\
0 & y_{v_1,v_2} & 0 & v_{v_2,v_3} & Y[v_2,C]\\
0 & y_{v_1,v_3} & y_{v_2,v_3} & 0 & Y[v_3,C]\\
Y[S,v_0] & Y[S,v_1] & Y[S,v_2] & Y[S,v_3] & Y[C]
\end{bmatrix}.
\end{equation*}
Then, since $B Y = 0$, $y_{v_1,v_2} = y_{v_1,v_3} = y_{v_2,v_3} = 0$. 
Let 
\begin{equation*}
X = [x_{i,j}] = \begin{bmatrix}
0 & 0 & 0 & Y[v_1,C]\\
0 & 0 & 0 & Y[v_2,C]\\
0 & 0 & 0 & Y[v_3,C]\\
Y[C,v_1] & Y[C,v_2] & Y[C,v_3] & Y[C]
\end{bmatrix}.
\end{equation*}
Then $A X = 0$ and $x_{i,j} = 0$ if $i$ and $j$ are adjacent or if $i=j$. Since $A$ has the SAP, $X=0$. Hence $Y=0$, which means that $B$ has the SAP. Hence $\nu(G',\Sigma')\geq \nu(G,\Sigma)$.
\end{proof}

The following lemma deals with series and series-parallel reductions.

\begin{lemma}\label{lem:seriesparallelred}
Let $(G,\Sigma)$ be a signed graph. If $(G_1, \Sigma_1)$ is obtained from $(G,\Sigma)$ by a series or series-parallel reduction, then $\nu(G_1,\Sigma_1) = \nu(G,\Sigma)$.
\end{lemma}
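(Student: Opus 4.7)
The plan is to prove both inequalities $\nu(G_1,\Sigma_1)\geq \nu(G,\Sigma)$ and $\nu(G_1,\Sigma_1)\leq \nu(G,\Sigma)$, in each case relating matrices in $S(G,\Sigma)$ to matrices in $S(G_1,\Sigma_1)$ via a Schur complement at the reduced vertex $v$.

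For a series reduction, where $v$ has two distinct neighbors $u,w$, the inequality $\nu(G_1,\Sigma_1)\geq \nu(G,\Sigma)$ is immediate from Lemma~\ref{lem:reduction2clique} applied with $(G_2,\Sigma_2)$ equal to the two-edge path $u$-$v$-$w$ (with the inherited signs): $(G_2,\Sigma_2)$ is a tree and hence bipartite and connected, the set $V(G_1)\cap V(G_2)$ equals $\{u,w\}$, and the single edge the lemma adds between $u$ and $w$ has exactly the parity prescribed by the series reduction. For the opposite inequality, I would fix a positive semidefinite $B\in S(G_1,\Sigma_1)$ with the SAP and $\nullity(B)=\nu(G_1,\Sigma_1)$ and construct $A\in S(G,\Sigma)$ as a reverse Schur complement at $v$: take $a_{vv}=1$, choose $a_{uv}$ and $a_{vw}$ with the signs dictated by the edges $uv$ and $vw$ and with $a_{uv}a_{vw}$ absorbing the contribution of the new edge to $b_{uw}$, copy all entries not involving $v$ from $B$, and set $a_{uu}=b_{uu}+a_{uv}^2$ and $a_{ww}=b_{ww}+a_{vw}^2$. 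The sign rules of the series reduction (odd-odd and even-even both produce an even edge, mixed produces an odd edge) are exactly what makes these sign choices consistent, and by construction $A/A[\{v\}]=B$, so $A$ is positive semidefinite with $\nullity(A)=\nullity(B)$.

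To verify the SAP of $A$, let $X=[x_{ij}]$ be a symmetric matrix with $x_{ij}=0$ whenever $ij\in E(G)$ or $i=j$ and with $AX=0$. The equation $(AX)_{vu}=0$ forces $a_{vw}x_{wu}=0$, and since $a_{vw}\neq 0$ this gives $x_{uw}=0$; for $j\notin\{u,v,w\}$ the equation $(AX)_{vj}=0$ expresses $x_{vj}=-(a_{uv}x_{uj}+a_{vw}x_{wj})/a_{vv}$. Substituting this into $(AX)_{ij}$ for $i,j\neq v$ produces $(BY)_{ij}$, where $Y=X[V(G_1)]$, so $BY=0$. Because $y_{uw}=x_{uw}=0$, the matrix $Y$ satisfies the SAP zero pattern for $B$, and hence $Y=0$ by the SAP of $B$; the formula for $x_{vj}$ then gives $X=0$. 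The series-parallel reduction is handled in the same spirit: here $v$ has two neighbors $u$ (joined to $v$ by an odd and an even edge in parallel) and $w$ (joined to $v$ by a single edge $g$). The constraint that $a_{vw}$ has a definite sign forces $a_{vv}>0$ for every positive semidefinite $A\in S(G,\Sigma)$, so the Schur complement at $v$ is defined and lands in $S(G_1,\Sigma_1)$. The key sign check is that the $(u,\tilde v)$ entry of the Schur complement is unconstrained in sign, which matches $G_1$, where $u$ and the merged vertex $\tilde v$ are connected by both an odd and an even edge. The reverse construction and the SAP check proceed exactly as in the series case.

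The main obstacle is the SAP preservation: one must show that the new edge created by the reduction forces the corresponding entry of the candidate kernel matrix to vanish, since this is what allows the restriction $Y$ to satisfy the SAP zero pattern for $B$ and, together with $BY=0$, yields $Y=0$ and finally $X=0$. This rests on the fact that the $v$-row equations of $AX=0$, combined with the zeros prescribed by the edges of $G$ incident to $v$, pin down the relevant entries of $X$ precisely because $a_{vw}$ is sign-constrained (and thus nonzero).
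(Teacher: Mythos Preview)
Your argument is correct. For the inequality $\nu(G_1,\Sigma_1)\geq\nu(G,\Sigma)$ your approach coincides with the paper's: the paper writes out the Schur complement of $A[\{v\}]$ in $A$ explicitly for both the series and the series-parallel case at once, whereas you invoke Lemma~\ref{lem:reduction2clique} for the series case and sketch the direct Schur complement for the series-parallel case, but the content is identical.

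Where you diverge is in the reverse inequality $\nu(G_1,\Sigma_1)\leq\nu(G,\Sigma)$. You build $A$ from $B$ by a reverse Schur complement and verify the SAP of $A$ by hand; the paper instead dispatches this direction in one line by observing that $(G_1,\Sigma_1)$ is sign-equivalent to a minor of $(G,\Sigma)$ (a series reduction is just the contraction of one of the two edges at $v$, and a series-parallel reduction is by definition the contraction of $g$), so Theorem~\ref{thm:numinor} applies. Your route is longer but self-contained at the level of matrix manipulations, not relying on the general minor-monotonicity theorem; the paper's route is much shorter but leans on that result. Your identification of the crucial SAP step---that the $v$-row of $AX=0$ forces $x_{uw}=0$ because $a_{vw}\neq 0$, so the restricted matrix $Y$ meets the zero pattern required for $(G_1,\Sigma_1)$---is exactly right and is what makes the reverse construction go through.
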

\begin{proof}
Since $(G_1,\Sigma_1)$ is sign-equivalent to a minor of $(G,\Sigma)$, $\nu(G_1,\Sigma_1)\leq \nu(G,\Sigma)$. Hence it suffices to show that $\nu(G,\Sigma)\leq \nu(G_1,\Sigma_1)$.

Let $A = [a_{i,j}]\in S(G,\Sigma)$ be a positive semidefinite matrix that has the SAP. Let $v$ be the vertex on which we apply the series or series-parallel reduction, let $u$ and $w$ be the neighbors of $v$, where we assume that between $v$ and $w$ there is a single edge, and let $R = V(G)\setminus\{u,v,w\}$. Then $a_{v,v} > 0$, for otherwise the $2\times 2$ principal matrix $\begin{bmatrix} a_{v,v} & a_{v,w}\\ a_{w,v} & a_{w,w}\end{bmatrix}$ would be negative, contradiction that $A$ is positive semidefinite.
The Schur complement of $A[v]$ in $A$ is
\begin{equation*}
B = [b_{i,j}] = \begin{bmatrix}
A[\{u,w\}-A[\{u,w\},v]a_{v,v}^{-1} A[v,\{u,w\}] & A[\{u,w\},R]\\
A[R,\{u,w\}] & A[R]
\end{bmatrix}.
\end{equation*}
Since $a_{v,v} > 0$, $B$ is positive semidefinite and $\nullity(B ) =\nullity(A)$. Since $b_{u,w} = a_{u,w} - a_{u,v}a_{v,v}^{-1} a_{v,w}$, $B\in S(G_1,\Sigma_1)$.

To see that $B$ has the SAP, let $Y=y_{i,j}$ be a symmetric matrix with $y_{i,j}=0$ if $i=j$ or if $i$ and $j$ are adjacent, and such that $B Y = 0$. Let $Z = -a_{v,v}^{-1} A[v,\{u,w\}] Y[\{u,w\},R]$ and
\begin{equation*}
X = [x_{i,j}] = \begin{bmatrix}
0 & 0 & Z\\
0 & 0 & Y[\{u,w\},R]\\
Z^T & Y[R,\{u,w\}] & Y[R]
\end{bmatrix}.
\end{equation*}
Then $x_{i,j} = 0$ if $i=j$ or if $i$ and $j$ are adjacent, and $A X = 0$. As $A$ has the SAP, $X=0$. Hence $Y=0$, which shows that $B$ has the SAP. Hence $\nu(G,\Sigma) =\nu(G_1,\Sigma_1)$.
\end{proof}

From the previous lemma it follows that the problem of determining $\nu(G,\Sigma)$ for a signed graph $(G,\Sigma)$ can be transformed, by subdividing each even edge, to determining $\nu(H,E(H))$ for a signed graph $(H,E(H))$, in which each edge is odd.

The proof of the following lemma is clear.
\begin{lemma}\label{lem:parallelred}
Let $(G,\Sigma)$ be a signed graph. If $e,f$ are parallel edges and either $e,f\in \Sigma$ or $e,f\not\in \Sigma$, and $(G',\Sigma')$ is the signed graph obtained from $(G,\Sigma)$ by deleting $e$, then 
$M_+(G',\Sigma')= M_+(G,\Sigma)$ and $\nu(G',\Sigma')= \nu(G,\Sigma)$.
\end{lemma}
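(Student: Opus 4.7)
The plan is to argue that the matrix classes $S(G,\Sigma)$ and $S(G',\Sigma')$ coincide, and that the adjacency structure used to define the SAP is unchanged, so that $M_+$ and $\nu$ must be equal.

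First, I would verify that $S(G,\Sigma) = S(G',\Sigma')$. Let $u,v$ be the common endpoints of the parallel edges $e$ and $f$. The constraints that $S(G,\Sigma)$ imposes on the entry $a_{u,v}$ depend only on which of the sets \emph{``edges between $u$ and $v$ that lie in $\Sigma$''} and \emph{``edges between $u$ and $v$ that lie outside $\Sigma$''} are nonempty. Since $e$ and $f$ have the same sign (both in $\Sigma$, or both outside $\Sigma$), deleting $e$ leaves $f$ in the same sign class, so neither of these two sets changes from nonempty to empty. Hence the constraint on $a_{u,v}$ is identical in $(G,\Sigma)$ and $(G',\Sigma')$. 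For every other pair of vertices, the edge set between them is literally unchanged, so the constraints match. Thus $S(G,\Sigma)=S(G',\Sigma')$.

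Next I would note that $u$ and $v$ remain adjacent in $G'$ via $f$, so a pair $\{i,j\}$ is adjacent in $G$ if and only if it is adjacent in $G'$. The SAP condition on a matrix $A$ — namely that the only symmetric $X$ with $x_{i,j}=0$ whenever $i=j$ or $ij\in E$ and $AX=0$ is $X=0$ — depends only on $A$ and on the adjacency structure of the underlying graph. Because both the matrix class and the adjacency pattern are preserved, a matrix $A$ belongs to $S(G,\Sigma)$, is positive semidefinite, and has the SAP as an element of $S(G,\Sigma)$ if and only if the same is true in $S(G',\Sigma')$.

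Taking the supremum of the nullity over all positive semidefinite matrices in the (common) class then yields $M_+(G,\Sigma)=M_+(G',\Sigma')$, and restricting further to those with the SAP yields $\nu(G,\Sigma)=\nu(G',\Sigma')$. There is no real obstacle here; the content of the lemma is simply the observation that duplicate edges of the same parity do not enlarge the collection of sign constraints or alter the adjacency graph used in the SAP.
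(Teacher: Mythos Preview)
Your argument is correct and is exactly the kind of verification the paper has in mind: the paper simply states that the proof of this lemma is clear and omits it entirely. Your observation that $S(G,\Sigma)=S(G',\Sigma')$ and that the underlying adjacency relation (hence the SAP constraint set) is unchanged is precisely the point.
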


We now show that if a signed graph $(G,\Sigma)$ has a $k$-split ($k=1,2,$ or $3$), then $\nu(G,\Sigma)$ can be calculated from the values of $\nu$ on the parts of the $k$-split. 

\begin{thm}\label{thm:3split}
Let $(G_1,\Sigma_1)$ form a $3$-split of $(G,\Sigma)$. Then $\nu(G,\Sigma) = \nu(G_1,\Sigma_1)$. 
\end{thm}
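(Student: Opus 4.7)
Let $S=\{u_1,u_2,u_3\}$. The plan is to introduce an intermediate signed graph $(H,\Omega)$ obtained from $(\tilde{G_1},\Sigma\cap E_1)$ by adding, for each pair $\{u_i,u_j\}\subseteq S$ with $i\neq j$, a single edge whose sign equals the parity of any path from $u_i$ to $u_j$ in $(\tilde{G_2},\Sigma_2)$; this is well-defined because $\tilde{G_2}$ is bipartite and connected, so all such paths share a common parity. I will prove $\nu(G,\Sigma)=\nu(H,\Omega)=\nu(G_1,\Sigma_1)$.

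For the equality $\nu(H,\Omega)=\nu(G_1,\Sigma_1)$, the key observation is that the triangle $u_1u_2u_3$ in $(H,\Omega)$ is even: its three added signs sum modulo $2$ to the parity of the closed walk $u_1\to u_2\to u_3\to u_1$ in the bipartite $\tilde{G_2}$, which is $0$. A $\Delta Y$-transformation at this even triangle therefore produces a signed graph $(H^\star,\Omega^\star)$ with a new vertex $w'$ and three star edges; a case analysis of the $\Delta Y$-rule against the path parities in $\tilde{G_2}$ shows that $(H^\star,\Omega^\star)$ is sign-equivalent to $(G_1,\Sigma_1)$ (after identifying $w'=w$ and possibly re-signing on $\{w'\}$ to match the convention that $u_1w$ is always even). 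Lemma~\ref{lem:DeltaY} gives $\nu(H,\Omega)\le \nu(G_1,\Sigma_1)$, while a $Y\Delta$-transformation at $w$ in $(G_1,\Sigma_1)$ recovers $(H,\Omega)$, so by Lemma~\ref{lem:YDelta} the reverse inequality holds, yielding equality. Applying Lemma~\ref{lem:reduction2clique} to $(G,\Sigma)$ with the $3$-partition $[(\tilde{G_1},\Sigma\cap E_1),(\tilde{G_2},\Sigma_2)]$ then gives $\nu(H,\Omega)\ge \nu(G,\Sigma)$, so $\nu(G,\Sigma)\le \nu(G_1,\Sigma_1)$.

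The reverse inequality $\nu(G_1,\Sigma_1)\le \nu(G,\Sigma)$ is established by lifting: given a PSD matrix $A_1\in S(G_1,\Sigma_1)$ with the SAP and nullity $n$, form $B\in S(H,\Omega)$ as the Schur complement of $A_1$ at $w$ (PSD with the SAP and nullity $n$), and construct $A\in S(G,\Sigma)$ on $V_1\cup C$ (where $C=V(\tilde{G_2})\setminus S$ and $D=V_1\setminus S$) by setting $A[D]=B[D]$, $A[D,S]=B[D,S]$, $A[D,C]=0$, choosing $A[C]$ to be any positive definite matrix in $S(\tilde{G_2}[C],\Sigma_2\cap E(\tilde{G_2}[C]))$, choosing $A[S,C]$ with signs matching the cut edges of $\tilde{G_2}$, and setting $A[S]:=B[S]+A[S,C]A[C]^{-1}A[C,S]$. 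Then the Schur complement of $A[C]$ in $A$ is exactly $B$, so $A$ is PSD with nullity $n$, and the SAP for $A$ is inherited from that of $B$ by the argument used in the proof of Lemma~\ref{lem:reduction2clique}. The main obstacle is verifying that $A\in S(G,\Sigma)$: after re-signing on a suitable $U\subseteq C$ to make $\tilde{G_2}[C]$ all even, Lemma~\ref{lem:Mmatrix} constrains the correction $A[S,C]A[C]^{-1}A[C,S]$ to have entries whose signs match the path parities in $\tilde{G_2}$, hence the triangle-edge signs in $(H,\Omega)$; the delicate positions are pairs $\{u_i,u_j\}$ that are non-adjacent in $G$ but carry a triangle edge in $(H,\Omega)$, where the correction must exactly cancel $B[u_i,u_j]$, and tuning $A[C]$ and $A[S,C]$ so that all three off-diagonal entries are matched simultaneously (while preserving the sign constraints at adjacent pairs) is the crux of the technical work.
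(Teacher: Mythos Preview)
Your argument for the inequality $\nu(G,\Sigma)\le\nu(G_1,\Sigma_1)$ is exactly the paper's: pass through the intermediate $(H,\Omega)$, invoke Lemma~\ref{lem:reduction2clique} to get $\nu(G,\Sigma)\le\nu(H,\Omega)$, note that the added triangle is even, and apply Lemma~\ref{lem:DeltaY} to obtain $\nu(H,\Omega)\le\nu(G_1,\Sigma_1)$. (The extra step of recovering $(H,\Omega)$ from $(G_1,\Sigma_1)$ by a $Y\Delta$ at $w$, so as to get the equality $\nu(H,\Omega)=\nu(G_1,\Sigma_1)$, is correct but unnecessary here.)

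For the reverse inequality $\nu(G_1,\Sigma_1)\le\nu(G,\Sigma)$ you are working far too hard, and the lifting you sketch has genuine gaps. The paper handles this direction in one line: $(G_1,\Sigma_1)$ is a minor of $(G,\Sigma)$, so Theorem~\ref{thm:numinor} gives the inequality immediately. Your matrix construction, by contrast, leaves two things unproved. First, the ``crux'' you yourself flag --- choosing $A[C]$ and $A[S,C]$ so that the three off-diagonal entries of $A[S,C]A[C]^{-1}A[C,S]$ simultaneously hit prescribed targets (exact cancellation at pairs $u_i,u_j$ non-adjacent in $G$, correct sign elsewhere) --- is asserted but not established. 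Second, and more seriously, the SAP transfer you invoke runs in the wrong direction. The argument in Lemma~\ref{lem:reduction2clique} shows that if the \emph{large} matrix $A$ has the SAP then its Schur complement $B$ does; you need the converse. That converse does not follow by the same mechanism: given a symmetric $X$ with zero diagonal, zeros on $G$-adjacencies, and $AX=0$, one can check that $B\,X[V_1]=0$, but $X[V_1]$ need not vanish at the triangle positions $X[u_i,u_j]$ whenever $u_i$ and $u_j$ are non-adjacent in $G$, so $X[V_1]$ is not a valid test matrix for $B$ and the SAP of $B$ cannot be applied. Replace the entire lifting paragraph by the observation that $(G_1,\Sigma_1)$ is a minor of $(G,\Sigma)$ together with Theorem~\ref{thm:numinor}.
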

\begin{proof}
Since $(G_1,\Sigma_1)$ is a minor of $(G,\Sigma)$, $\nu(G_1,\Sigma_1)\leq \nu(G,\Sigma)$.

Let $E_1,E_2$ be a partitioning of $E(G)$ with both $E_1$ and $E_2$ nonempty which gives rise to the $3$-split $(G_1,\Sigma_1)$. 
For $i=1,2$, let $V_i$ be the set of all ends of edges in $E_i$, and let $\tilde{G_i} = (V_i,E_i)$. Let $(H,\Omega)$ be obtained from $(\tilde{G_1},E_1\cap \Sigma)$ by adding between each pair of distinct vertices $u,v$ of $V(G_1)\cap V(G_2)$ an even (odd) edge if there is an even (odd) path in $(\tilde{G_2},E_2\cap \Sigma)$ between $u$ and $v$. Let $T$ be the triangle on the three added edges. Since $(\tilde{G_2},E_2\cap \Sigma)$ is bipartite, $T$ is an even cycle, and moreover, since $(\tilde{G_2},E_2\cap \Sigma)$ is connected, $\nu(H,\Omega)\geq \nu(G,\Sigma)$, by Lemma~\ref{lem:reduction2clique}. 
Applying a $\Delta Y$-transformation on the triangle $T$ in $(H,\Omega)$ gives $(G_1,\Sigma_1)$. By Lemma~\ref{lem:DeltaY}, $\nu(G_1,\Sigma_1)\geq \nu(H,\Omega)$. Hence $\nu(G_1,\Sigma_1)\geq \nu(G,\Sigma)$, and since  $\nu(G_1,\Sigma_1)\leq \nu(G,\Sigma)$, the theorem follows.
\end{proof}

The following lemma shows that the SAP puts certain restrictions on a matrix $A\in S(G,\Sigma)$.

\begin{lemma} \label{lem:1eigenvalue0}
Let $(G,\Sigma)$ be a signed graph and let $A \in S(G,\Sigma)$ be positive semidefinite. Let $S \subseteq V$ and let $C_1,\dots,C_m$ be the vertex-sets of the connected components of
$G-S$. If $A$ has the SAP, then there is at most one $C_i$ with $\nullity(A[C_i]) > 0$.
\end{lemma}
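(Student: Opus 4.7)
The plan is to argue by contradiction. Suppose two distinct components, say $C_1$ and $C_2$, both have $\nullity(A[C_i])>0$. Pick nonzero vectors $v_i\in\ker(A[C_i])$ and let $\tilde{v}_i\in\mathbb{R}^V$ denote the extension of $v_i$ by zeros outside $C_i$. The goal is to cook up a nonzero symmetric matrix $X$ whose zero pattern is compatible with the SAP and which satisfies $AX=0$; this contradicts the assumption that $A$ has the SAP.

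The key preliminary step, and in my view the main subtlety, is to upgrade the observation $A[C_i]v_i=0$ to the stronger statement $A\tilde{v}_i=0$. This is not automatic, since $A[S,C_i]v_i$ could a priori be nonzero. Here is where positive semidefiniteness enters: we have
\begin{equation*}
\tilde{v}_i^{\T}A\tilde{v}_i \;=\; v_i^{\T}A[C_i]v_i \;=\; 0,
\end{equation*}
so since $A\succeq 0$ this forces $A\tilde{v}_i=0$, i.e., $\tilde{v}_i\in\ker(A)$.

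With both $\tilde{v}_1,\tilde{v}_2\in\ker(A)$, I would set
\begin{equation*}
X \;=\; \tilde{v}_1\tilde{v}_2^{\T} + \tilde{v}_2\tilde{v}_1^{\T},
\end{equation*}
which is symmetric, satisfies $AX=0$, and is nonzero (choose indices $i\in C_1,j\in C_2$ with $(v_1)_i,(v_2)_j\neq 0$; then $X_{ij}=(v_1)_i(v_2)_j\neq 0$).

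It remains to check that $x_{ij}=0$ whenever $i=j$ or $i$ and $j$ are adjacent in $G$. The entries of $X$ are supported on $(C_1\times C_2)\cup(C_2\times C_1)$, since $\tilde{v}_1$ vanishes off $C_1$ and $\tilde{v}_2$ vanishes off $C_2$. In particular $X[C_i,C_i]=0$ for all $i$, $X[S,V]=0$, and $X$ has zero diagonal because $C_1$ and $C_2$ are disjoint. The only potentially nonzero entries sit between $C_1$ and $C_2$, but $C_1$ and $C_2$ are different connected components of $G-S$, so any edge in $G$ with one end in $C_1$ and one end in $C_2$ would merge them into a single component of $G-S$ (neither endpoint lies in $S$). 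Hence no such edge exists, and $x_{ij}=0$ at every edge of $G$. This nonzero $X$ witnesses failure of the SAP, completing the contradiction.
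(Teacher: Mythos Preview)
Your proof is correct and follows essentially the same approach as the paper: extend null vectors of $A[C_1]$ and $A[C_2]$ by zeros, use positive semidefiniteness to conclude they lie in $\ker(A)$, and then form $X=\tilde v_1\tilde v_2^{\T}+\tilde v_2\tilde v_1^{\T}$ as a nonzero SAP-violating witness. Your write-up is in fact a bit more explicit than the paper's in verifying the zero pattern of $X$ and justifying why $\tilde v_i\in\ker(A)$.
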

\begin{proof}
Suppose that $\nullity(A[C_i])>0$ and $\nullity(A[C_j])>0$, where $i\not=j$. Let $y$ be a nonzero vector in $\ker(A[C_i])$ and $z$ be a nonzero vector in $\ker(A[C_j])$. Define the vectors $u,w$ on $V(G)$ by $u_v = y_v$ if $v\in C_i$ and $u_v=0$ otherwise, and $w_v = z_v$ if $v\in C_j$ and $w_v=0$ otherwise. Then $u^T A u = y^T A[C_i] y = 0$ and $w^T A w = z^T A[C_j] z = 0$. Hence $u,w \in \ker(A)$, as $A$ is positive semidefinite.
Let 
\begin{equation*}
X = [x_{i,j}] = u w^T + w u^T.
\end{equation*}
Then $x_{i,j}=0$ if $i$ and $j$ are adjacent or if $i=j$. Since $A X = 0$ and $X\not=0$, $A$ does not have the SAP. This contradiction shows that there is at most one $C_i$ with $\nullity(A[C_i]) > 0$.
\end{proof}

\begin{thm}\label{thm:12split}
Let $(G_1,\Sigma_1)$ and $(G_2,\Sigma_2)$ form a $0$-, $1$-, or $2$-split of $(G,\Sigma)$. Then $\nu(G,\Sigma) = \max\{\nu(G_1, \Sigma_1),\nu(G_2, \Sigma_2)\}$.
\end{thm}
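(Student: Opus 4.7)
The plan is to establish both inequalities separately. For $\nu(G,\Sigma)\geq\max\{\nu(G_1,\Sigma_1),\nu(G_2,\Sigma_2)\}$, the strategy is to exhibit each part $(G_i,\Sigma_i)$ as a minor of $(G,\Sigma)$ and invoke Theorem~\ref{thm:numinor}. For $0$- and $1$-splits this is immediate, since $(G_i,\Sigma_i)=(\tilde{G_i},E_i\cap\Sigma)$ is literally a subgraph of $(G,\Sigma)$. For a $2$-split with shared vertices $\{u,v\}$, when $\tilde{G_j}$ ($j\neq i$) is bipartite one picks any $u$-$v$ path in $\tilde{G_j}$ (all such paths have a common parity matching the sign of the added $uv$-edge in $(G_i,\Sigma_i)$, since otherwise their symmetric difference would contain an odd cycle in $\tilde{G_j}$), deletes the remaining edges of $\tilde{G_j}$, and contracts the interior edges of the chosen path; when $\tilde{G_j}$ is non-bipartite one uses a fundamental odd cycle relative to a spanning tree of $\tilde{G_j}$ together with a $u$-$v$ tree path to assemble a theta-like subgraph from which, after suitable deletions and contractions, two parallel $uv$-edges of opposite parity emerge.

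For the reverse inequality I would pick a positive semidefinite matrix $A\in S(G,\Sigma)$ with the SAP and $\nullity(A)=\nu(G,\Sigma)$, set $S=V_1\cap V_2$, and apply Lemma~\ref{lem:1eigenvalue0} with this $S$. Each connected component of $G-S$ lies in either $V_1\setminus S$ or $V_2\setminus S$, and by that lemma at most one component has $A[\cdot]$ singular. After possibly interchanging the roles of $(G_1,\Sigma_1)$ and $(G_2,\Sigma_2)$, we may assume every component inside $V_2\setminus S$ is nonsingular, so that $A[V_2\setminus S]$ is positive definite. Form the Schur complement
\begin{equation*}
B := A[V_1]-A[V_1,V_2\setminus S]\,A[V_2\setminus S]^{-1}\,A[V_2\setminus S,V_1],
\end{equation*}
a PSD matrix on $V_1$ with $\nullity(B)=\nullity(A)$. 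The aim is to show $B\in S(G_1,\Sigma_1)$ and that $B$ inherits the SAP, which will give $\nu(G_1,\Sigma_1)\geq\nullity(A)=\nu(G,\Sigma)$. The SAP of $B$ follows from that of $A$ by extending any symmetric witness $Y$ on $V_1$ to a matrix $X$ on $V(G)$ via the block $Z=-A[V_2\setminus S]^{-1}A[V_2\setminus S,S]\,Y[S,V_1\setminus S]$ together with zero entries on $V_2\setminus S$, and then verifying $AX=0$ and applying the SAP of $A$ exactly as in the proof of Lemma~\ref{lem:reduction2clique}.

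The main obstacle is verifying the sign pattern of $B$, and this breaks into cases by $|S|$. For $|S|\leq 1$ only a single diagonal entry can differ and diagonal entries of matrices in $S(G_1,\Sigma_1)$ are unrestricted, so $B\in S(G_1,\Sigma_1)$ automatically. For $|S|=2$ with $\tilde{G_2}$ non-bipartite, the pair of parallel edges added between $u$ and $v$ in $(G_1,\Sigma_1)$ leaves the $(u,v)$ entry of matrices in $S(G_1,\Sigma_1)$ unrestricted, so $B\in S(G_1,\Sigma_1)$ is again immediate. The delicate case is $|S|=2$ with $\tilde{G_2}$ bipartite: after re-signing $\tilde{G_2}$ so that $E_2\cap\Sigma=\emptyset$, $A[V_2\setminus S]$ becomes positive definite with nonpositive off-diagonal entries, so Lemma~\ref{lem:Mmatrix} yields that $A[V_2\setminus S]^{-1}$ is entrywise nonnegative; an entrywise sign analysis of the correction term $A[S,V_2\setminus S]\,A[V_2\setminus S]^{-1}\,A[V_2\setminus S,S]$ then shows that the $(u,v)$ entry of $B$ carries exactly the sign dictated by the parity of $u$-$v$ paths in $\tilde{G_2}$, which is the sign of the unique added $uv$-edge in $(G_1,\Sigma_1)$. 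This sign-analysis step is essentially the content of the proof of Lemma~\ref{lem:reduction2clique}, so the entire argument for the reverse inequality can be unified with that lemma in this final case.
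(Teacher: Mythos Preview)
Your proposal is correct and follows essentially the same approach as the paper: minor-monotonicity (Theorem~\ref{thm:numinor}) for the inequality $\nu(G,\Sigma)\geq\max\{\nu(G_i,\Sigma_i)\}$, and for the reverse inequality a Schur-complement argument using Lemma~\ref{lem:1eigenvalue0} to locate a positive-definite block, with the SAP transferred exactly as you describe. The only organizational difference is that the paper disposes of the non-strong $2$-split case up front by invoking Lemma~\ref{lem:reduction2clique} directly (so that in the remaining strong case the $(u,v)$ entry is automatically unrestricted), whereas you run the Schur complement uniformly and then split into the bipartite/non-bipartite sign-pattern cases afterward---but you correctly identify that the bipartite case is precisely Lemma~\ref{lem:reduction2clique}, so the two arguments coincide.
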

\begin{proof}
As $(G_1,\Sigma_1)$ and $(G_2,\Sigma_2)$ are isomorphic to minors of $(G,\Sigma)$, it follows by the minor-monotonicity of $\nu$ that $\nu(G_1,\Sigma_1)\leq \nu(G,\Sigma)$ and $\nu(G_2,\Sigma_2)\leq \nu(G,\Sigma)$. Hence it suffices to show that $\nu(G,\Sigma) = \nu(G_1,\Sigma_1)$ or $\nu(G,\Sigma)=\nu(G_2,\Sigma_2)$.

Let $A \in S(G,\Sigma)$ be a positive semidefinite matrix that has the SAP. 

Suppose $(G_1,\Sigma_1)$ and $(G_2,\Sigma_2)$ form a $0$-split of $(G,\Sigma)$. By Lemma~\ref{lem:1eigenvalue0},
$A[V(G_1)]$ or $A[V(G_2)]$ is positive definite. By symmetry, we may assume that $A[V(G_1)]$ is positive definite. Then $A[V(G_2)]$ is positive semidefinite and $\nullity A[V(G_2)] = \nullity A$. To see that $A[V(G_2)]$ has the SAP, let $Y = [y_{i,j}]$ be a symmetric matrix with $y_{i,j} = 0$ if $i=j$ or if $i$ and $j$ are adjacent, and such that $A[V(G_2)] Y = 0$. Let
\begin{equation*}
X = [x_{i,j}] = \begin{bmatrix}
0 & 0\\
0 & Y
\end{bmatrix}.
\end{equation*}
Then, $x_{i,j} = 0$ if $i=j$ or if $i$ and $j$ are adjacent, and $AX = 0$. As $A$ has the SAP, $X = 0$. Hence $Y=0$, which shows that $A[V(G_2)]$ has the SAP.  Hence $\nu(G,\Sigma) =\nu(G_2,\Sigma_2)$.

Suppose now $(G_1,\Sigma_1)$ and $(G_2,\Sigma_2)$ form a $1$-or $2$-split of $(G,\Sigma)$. If $(G_1,\Sigma_1)$ and $(G_2,\Sigma_2)$ form a $2$-split, we may assume that the $2$-split is strong, for otherwise we can use Lemma~\ref{lem:reduction2clique}.
Let $S = V(G_1)\cap V(G_2)$, $C_1 = V(G_1)\setminus S$, and $C_2 = V(G_2)\setminus S$.
By Lemma~\ref{lem:1eigenvalue0}, $A[C_1]$ or
$A[C_2]$ is positive definite. We may assume that $A[C_1]$ is positive definite. The Schur complement of 
$A[V(C_1)]$ in
\begin{equation*}
A = \begin{bmatrix}
A[C_1] & A[C_1,S] & 0\\
A[S,C_1] & A[S] & A[S,C_2]\\
0 & A[C_2,S] & A[C_2]
\end{bmatrix}
\end{equation*} 
is 
\begin{equation*}
B = \begin{bmatrix}
A[S]-A[S,C_1] A[C_1]^{-1} A[C_1,S] & A[S,C_2]\\
A[C_2,S] & A[C_2]
\end{bmatrix}.
\end{equation*}
The matrix $B$ is positive semidefinite and $\nullity B = \nullity A$.
Furthermore, $B\in S(G_2,\Sigma_2)$.

To see that $B$ has the SAP, let $Y=y_{i,j}$ be a symmetric matrix with $y_{i,j}=0$ if $i=j$ or if $i$ and $j$ are adjacent, and such that $B Y = 0$. Let $Z = -A[C_1]^{-1} A[C_1,S] Y[S,C_2]$ and
\begin{equation*}
X = [x_{i,j}] = \begin{bmatrix}
0 & 0 & Z\\
0 & 0 & Y[S,C_2]\\
Z^T & Y[C_2,S] & Y[C_2]
\end{bmatrix}.
\end{equation*}
Then $x_{i,j} = 0$ if $i=j$ or if $i$ and $j$ are adjacent, and $A X = 0$. As $A$ has the SAP, $X=0$. Hence $Y=0$, which shows that $B$ has the SAP. Hence $\nu(G,\Sigma) =\nu(G_2,\Sigma_2)$.
\end{proof}

\section{Bounding the nullity of the graph classes}\label{sec:boundingnullity}

In this section we first show that $\nu(G,\Sigma)\leq 2$ for signed graphs $(G,\Sigma)$ that are almost bipartite or planar with at most two odd faces, and that $\nu(H,\Omega)=2$ for the double prism $(H,\Omega)$. Then we give the proof of Theorem~\ref{thm:mainthm}.
%

\begin{lemma}\label{lem:almostbipartite}
If $(G,\Sigma)$ is $2$-connected and almost bipartite, then $\nu(G,\Sigma)\leq M_+(G, \Sigma)\leq 2$.
\end{lemma}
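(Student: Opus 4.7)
The plan is to exploit the almost-bipartite structure through the single distinguished vertex. Let $v$ be a vertex of $G$ lying on every odd cycle of $(G,\Sigma)$. Since $G$ is $2$-connected, $G - v$ is connected, and since every odd cycle of $(G,\Sigma)$ passes through $v$, the signed subgraph $(G-v,\Sigma\cap E(G-v))$ is bipartite. This puts us squarely in the setting of Lemma~\ref{lem:smnull1}.

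The first step is to take an arbitrary positive semidefinite $A\in S(G,\Sigma)$ and consider the principal submatrix $B = A[V(G)\setminus\{v\}]$. Then $B$ is positive semidefinite and lies in $S(G-v,\Sigma\cap E(G-v))$. Because $(G-v,\Sigma\cap E(G-v))$ is connected and bipartite, Lemma~\ref{lem:smnull1} gives $\nullity(B)\leq 1$.

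The second step is to lift this bound back to $A$. By Cauchy interlacing applied to the Hermitian matrix $A$ and its size-$(n-1)$ principal submatrix $B$, we have $\nullity(A)\leq \nullity(B)+1$. Combining with the previous step yields $\nullity(A)\leq 2$, so $M_+(G,\Sigma)\leq 2$. The inequality $\nu(G,\Sigma)\leq M_+(G,\Sigma)$ is immediate from the definition of $\nu$, completing the proof.

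There is essentially no obstacle: the only delicate point is verifying that $G-v$ is connected, which uses $2$-connectedness of $G$, and confirming that $A[V(G)\setminus\{v\}]$ genuinely sits in $S(G-v,\Sigma\cap E(G-v))$, which is immediate from the definition of $S$. Everything else is a routine combination of Lemma~\ref{lem:smnull1} with Cauchy interlacing; the Strong Arnold Property does not even need to enter the argument, since the conclusion is about $M_+$.
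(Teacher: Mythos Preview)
Your proof is correct and follows essentially the same approach as the paper: both remove the distinguished vertex $v$, apply Lemma~\ref{lem:smnull1} to the connected bipartite signed graph $(G-v,\Sigma\setminus\delta(v))$ to bound $\nullity(A[V\setminus\{v\}])\leq 1$, and then pass this bound back to $A$. The only cosmetic difference is that the paper does the last step by a direct kernel argument (choosing a nonzero $x\in\ker(A)$ with $x_v=x_w=0$ and deriving a contradiction), whereas you invoke Cauchy interlacing; these are equivalent ways of saying that deleting one row and column can drop the nullity by at most one.
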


\begin{proof}
Suppose for a contradiction that there exists a positive semidefinite matrix $A\in S(G,\Sigma)$ with $\nullity(A)>2$. Let $v$ be a vertex such that $v\in V(C)$ for each odd cycle $C$, and let $w$ be any other vertex. Then there exists a nonzero vector $x$ whose components corresponding to $v$ and $w$ equal $0$. Because $(G\setminus\{v\},\Sigma\setminus\delta(v))$ is connected and bipartite, we can deduce from Lemma~\ref{lem:smnull1} that the restriction of $x$ to $(G\setminus\{v\},\Sigma\setminus\delta(v))$ must have all positive or all negative components. However, 
$x_w = 0$, which yields a contradiction.
\end{proof}

\begin{lemma}\label{lem:twooddfaces}
If $(G,\Sigma)$ is $2$-connected planar with two odd faces, then $\nu(G,\Sigma)= 2$.
\end{lemma}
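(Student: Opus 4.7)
The plan is to combine Theorem~\ref{thm:planarreducible} with the $\nu$-invariance results of Section~\ref{sec:nusplits}. First I would fix a planar embedding of $G$ in which exactly two faces have odd bounding cycles; since $G$ is $2$-connected this makes $(G,\Sigma)$ a plane block with two odd faces, so Theorem~\ref{thm:planarreducible} supplies a sequence of series reductions, parallel reductions on even $2$-cycle faces, series-parallel reductions, and $\Delta Y$-exchanges that reduces $(G,\Sigma)$ to $K_2^=$.

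The key step is to verify that each operation in this sequence can only increase $\nu$ in the forward direction. Lemma~\ref{lem:seriesparallelred} shows that series and series-parallel reductions preserve $\nu$ exactly. A parallel reduction on a face $F$ bounded by two parallel edges is permitted only when $F$ is even; the enclosing $2$-cycle of $F$ being even forces the two parallel edges to have the same sign, so Lemma~\ref{lem:parallelred} applies and shows the reduction preserves $\nu$. Finally, Lemmas~\ref{lem:YDelta} and~\ref{lem:DeltaY} together give $\nu(G,\Sigma)=\nu(G',\Sigma')$ for each direction of a $\Delta Y$-exchange (each direction supplies a $\le$, and the exchanges are mutual inverses, so both inequalities collapse to equality).

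Chaining these invariances along the reduction sequence yields $\nu(G,\Sigma)\le\nu(K_2^=)=2$. For the matching lower bound, note that $(G,\Sigma)$ contains an odd cycle (the boundary of an odd face), so $K_2^=$ is a minor of $(G,\Sigma)$, and Theorem~\ref{thm:numinor} gives $\nu(G,\Sigma)\ge\nu(K_2^=)=2$. Combining the two inequalities produces $\nu(G,\Sigma)=2$.

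The argument is essentially assembly from previously established machinery, so I do not expect any serious obstacle. The one subtlety worth pointing out is that the notion of parallel reduction used in Lemma~\ref{lem:parallelred} (requiring two parallel edges of the same sign) matches the notion used in Theorem~\ref{thm:planarreducible} (deleting one edge of two parallel edges bounding an even face), which is precisely the same-sign condition; once this is observed, the proof is a direct concatenation of invariance lemmas with the minor-monotonicity of $\nu$.
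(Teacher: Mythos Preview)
Your proposal is correct and follows essentially the same route as the paper: invoke Theorem~\ref{thm:planarreducible} to reduce to $K_2^=$, then use Lemmas~\ref{lem:YDelta}, \ref{lem:DeltaY}, \ref{lem:seriesparallelred}, \ref{lem:parallelred} to push the bound $\nu\ge 3$ (hypothetically) along the reduction sequence to reach a contradiction with $\nu(K_2^=)=2$. Your version is in fact a bit more careful than the paper's: you explicitly check that the parallel reduction of Theorem~\ref{thm:planarreducible} (on an even $2$-cycle face) is the same-sign case covered by Lemma~\ref{lem:parallelred}, and you supply the lower bound $\nu(G,\Sigma)\ge 2$ via the odd-cycle minor, which the paper leaves implicit.
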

\begin{proof}
By Theorem~\ref{thm:planarreducible}, $(G,\Sigma)$ can  be reduced to $K_2^=$ using $\Delta Y$-exchanges, parallel, series, and parallel-series reductions. If $\nu(G,\Sigma)\geq 3$, then
	by Lemmas~\ref{lem:YDelta},~\ref{lem:DeltaY},~\ref{lem:seriesparallelred} and ~\ref{lem:parallelred}, $\nu(K_2^=)\geq 3$, which is a contradiction, as $\nu(K_2^=)=2$, see \cite{AraHalLivdH2012}. Hence $\nu(G,\Sigma)= 2$.
\end{proof}

\begin{lemma}\label{lem:doubleprism}
If $(G,\Sigma)$ is the double prism, then $\nu(G,\Sigma)=2$.
\end{lemma}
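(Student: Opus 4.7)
The argument splits into the two inequalities. For the lower bound, each of the three parallel odd/even pairs in the double prism is already a copy of $K_2^=$ as a subgraph, hence as a minor, so $\nu(G,\Sigma)\geq \nu(K_2^=)=2$ by Theorem~\ref{thm:numinor}.

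For the upper bound, my plan mirrors the proof of Lemma~\ref{lem:twooddfaces}: exhibit a sequence of operations from Section~\ref{sec:nusplits} that reduces $(G,\Sigma)$ to $K_2^=$, in which every step is either a $\Delta Y$-transformation on an even triangle, a parallel-series reduction, or a parallel reduction on a same-sign pair. By Lemmas~\ref{lem:DeltaY},~\ref{lem:seriesparallelred}, and~\ref{lem:parallelred} none of these operations decreases $\nu$, so a successful reduction gives $\nu(G,\Sigma)\leq \nu(K_2^=)=2$.

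Concretely, label the vertices so that the two all-even triangles are $u_1u_2u_3$ and $v_1v_2v_3$, with the parallel odd/even pair joining $u_i$ to $v_i$. First, apply a $\Delta Y$-transformation to the even triangle $u_1u_2u_3$, introducing a new vertex $w$ joined to each $u_i$ by an even edge. Each $u_i$ then has exactly two neighbors ($w$ and $v_i$) and three incident edges (the even $u_iw$ plus the parallel pair $u_iv_i$), so a parallel-series reduction at $u_i$ contracts $u_iw$. Applying this at $u_1,u_2,u_3$ in turn merges $w$ with the three $u_i$ into a single vertex $u^\ast$, and the result is a four-vertex signed graph in which $u^\ast$ is joined to each $v_i$ by a parallel odd/even pair, together with the still-intact even triangle $v_1v_2v_3$. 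I would then repeat the same construction on the triangle $v_1v_2v_3$: a $\Delta Y$-transformation producing a new vertex $w'$, followed by parallel-series reductions at $v_1,v_2,v_3$ contracting $v_iw'$ in turn. Each contraction after the first creates new pairs of same-sign parallel edges between $u^\ast$ and the current merged vertex, which I would immediately collapse by parallel reductions. What remains after the last step is the signed graph on two vertices joined by one odd and one even edge, namely $K_2^=$.

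The only real point of difficulty is bookkeeping: one must check at each parallel-series step that the reducing vertex has exactly one incident edge outside its parallel pair (so the hypothesis of the reduction is met), and correctly identify the new pairs of same-sign parallel edges produced by each contraction so that the intermediate parallel reductions are applied in the right order before proceeding to the next contraction.
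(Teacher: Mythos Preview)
Your proof is correct and follows essentially the same route as the paper: both use $\Delta Y$-transformations on the two even triangles followed by series-parallel (parallel-series) and parallel reductions to reach $K_2^=$, invoking Lemmas~\ref{lem:DeltaY}, \ref{lem:seriesparallelred}, and~\ref{lem:parallelred} for the upper bound and the $K_2^=$-minor for the lower bound. The only cosmetic difference is ordering---the paper performs both $\Delta Y$-transformations first and postpones all parallel reductions to the end, whereas you interleave the operations---but the argument is the same.
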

\begin{proof}
Applying $\Delta Y$-transformations on the two triangles in the double prism, then applying series-parallel reductions, and finally applying parallel reductions gives $K_2^=$. Hence $\nu(G,\Sigma) \leq \nu(K_2^=)$. Since $\nu(K_2^=)=2$ and since the double prism contains $K_2^=$ as a minor, $\nu(G,\Sigma)=2$.
\end{proof}

We now give the proof of Theorem~\ref{thm:mainthm}.

\begin{proof}[Proof of Theorem~\ref{thm:mainthm}]
Since $\nu(K_4^o) = \nu(K_3^=) = 3$, a signed graph $(G,\Sigma)$ with $\nu(G,\Sigma)\leq 2$ has no $K_4^o$- and no $K_3^=$-minor.

For the converse, suppose for a contradiction that $(G,\Sigma)$ is a signed graph with no $K_4^o$- and no $K_3^=$-minor, and $\nu(G,\Sigma)\geq 3$. Then $(G,\Sigma)$ has at least three vertices. We take $(G,\Sigma)$ with a minimal number of edges. By Theorems~\ref{thm:3split} and~\ref{thm:12split}, $(G,\Sigma)$ cannot have a $0$-, $1$-, $2$-, or $3$-split. Hence $(G,\Sigma)$ is $2$-connected. By Theorem~\ref{thm:gerards}, $(G,\Sigma)$ is almost bipartite or planar with two odd faces, or is the double prism.  By Lemmas~\ref{lem:almostbipartite},~\ref{lem:twooddfaces}, and~\ref{lem:doubleprism} we obtain a contradiction. Hence $\nu(G,\Sigma)\leq 2$.
\end{proof}

In \cite{AraHalLivdH2012}, we showed that $\nu(K_5^o)=4$ and $\nu(K_4^=)=4$. From Lemmas~\ref{lem:YDelta} and~\ref{lem:DeltaY} it follows that $\nu(H,\Omega)=4$ for any signed graph $(H,\Omega)$ that can be obtained from $K_4^=$ by a sequence of $\Delta Y$- and $Y\Delta$-transformations. We pose the following conjecture. 

\begin{conj}
A signed graph $(G,\Sigma)$ has $\nu(G,\Sigma)\leq 3$ if and only if $(G,\Sigma)$ has no minor isomorphic to $K_5^o$, $K_4^=$, or any signed graph $(H,\Omega)$ that can be obtained from $K_4^=$ by a sequence of $\Delta Y$- and $Y\Delta$-transformations.
\end{conj}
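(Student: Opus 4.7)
Since the statement is a conjecture, I outline a strategy in the spirit of the proof of Theorem~\ref{thm:mainthm}. The ``only if'' direction is essentially in hand: one has $\nu(K_5^o)=\nu(K_4^=)=4$ (see \cite{AraHalLivdH2012}), and Lemmas~\ref{lem:YDelta} and~\ref{lem:DeltaY} say that $\Delta Y$- and $Y\Delta$-transformations never decrease $\nu$, so every signed graph obtained from $K_4^=$ by such a sequence also has $\nu\geq 4$. The minor-monotonicity Theorem~\ref{thm:numinor} then rules all of these signed graphs out as minors of any $(G,\Sigma)$ with $\nu(G,\Sigma)\leq 3$.

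For the converse I would take a counterexample $(G,\Sigma)$ with the minimum number of edges: no minor from the conjectured list, but $\nu(G,\Sigma)\geq 4$. By Theorems~\ref{thm:3split} and~\ref{thm:12split}, $(G,\Sigma)$ has no $0$-, $1$-, $2$-, or $3$-split. A first technical step would be to extend those split formulas to an appropriately defined strong $4$-split by further Schur-complement and SAP-propagation arguments of the same flavor as in Lemmas~\ref{lem:reduction2clique} and~\ref{lem:1eigenvalue0}, thereby forcing the minimum counterexample to be genuinely $4$-connected in the signed sense.

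The main and genuinely hard step is to prove an analog of the Gerards decomposition Theorem~\ref{thm:gerards} for signed graphs excluding $K_5^o$, $K_4^=$, and every signed graph obtainable from $K_4^=$ by $\Delta Y$/$Y\Delta$-exchanges. The matroid translation given after Theorem~\ref{thm:gerards} suggests the natural framework: the extended even cycle matroid $\mathcal{M}(G,\Sigma)$ of such a signed graph is a binary matroid avoiding $\mathcal{M}(K_5^o)$, $\mathcal{M}(K_4^=)$, and every member of the binary $\Delta Y$-orbit of $\mathcal{M}(K_4^=)$. Unlike the $\nu\leq 2$ case, where Seymour's decomposition of regular matroids supplied the needed dichotomy for free, here no off-the-shelf matroid result seems directly applicable, and one would either have to prove a new Seymour-type structure theorem for this class of binary matroids or carry out a direct graph-theoretic analysis of $4$-connected signed graphs with no such minor. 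I would expect the resulting ``atoms'' to include (i) signed graphs whose odd cycles are all met by a small vertex-set (an ``almost bipartite at depth two'' class), (ii) signed graphs embeddable in the plane or projective plane with a bounded number of odd faces, and (iii) a finite list of sporadic graphs playing a role analogous to that of the double prism.

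Granted a decomposition of this shape, the final phase would verify $\nu\leq 3$ on each atomic class along the lines of Lemmas~\ref{lem:almostbipartite}--\ref{lem:doubleprism}. For the almost-bipartite-like atoms one forces enough entries of a null vector to vanish and applies the sign-pattern rigidity of Lemma~\ref{lem:smnull1} to the bipartite residue to contradict $\nullity(A)\geq 4$. For the planar-like atoms one proves a nice $\Delta Y$-reducibility theorem in the style of Theorem~\ref{thm:planarreducible} reducing every block in the class to $K_3^=$ (for which $\nu=3$) and transfers the bound via Lemmas~\ref{lem:YDelta}--\ref{lem:parallelred}, using that those operations only increase $\nu$ or leave it unchanged. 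The sporadic atoms would be checked by hand. The deep obstacle is clearly the decomposition theorem; once it is in place, the nullity bounds should follow from refinements of the techniques already developed in Sections~\ref{sec:nusplits} and~\ref{sec:boundingnullity}.
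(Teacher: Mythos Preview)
The statement is presented in the paper as a \emph{conjecture}; the paper offers no proof of the ``if'' direction, so there is nothing to compare your strategy against on that side. What the paper does record, in the paragraph immediately preceding the conjecture, is precisely your ``only if'' argument: $\nu(K_5^o)=\nu(K_4^=)=4$, and combining Lemmas~\ref{lem:YDelta} and~\ref{lem:DeltaY} shows that every signed graph obtained from $K_4^=$ by a sequence of $\Delta Y$- and $Y\Delta$-transformations has $\nu=4$; Theorem~\ref{thm:numinor} then excludes all of these as minors of any $(G,\Sigma)$ with $\nu(G,\Sigma)\leq 3$. Your treatment of this direction matches the paper's.

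One small sharpening: you say the two lemmas show the exchanges ``never decrease $\nu$'', which is correct but weaker than what actually holds. A $Y\Delta$-transformation always produces an \emph{even} triangle (the three new edge-parities sum to zero mod~$2$), so a $\Delta Y$-transformation can always undo it; applying Lemmas~\ref{lem:YDelta} and~\ref{lem:DeltaY} in both directions therefore gives $\nu(G,\Sigma)=\nu(G',\Sigma')$ under either exchange. That is why the paper asserts $\nu(H,\Omega)=4$, not merely $\geq 4$, for each member of the $\Delta Y$-orbit of $K_4^=$. This does not affect the validity of your argument, since $\geq 4$ is all that is needed.

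For the converse you are outlining a research programme, not a proof, and you say so. Your diagnosis that the crux is a Gerards-type decomposition theorem for this larger excluded-minor class, with no off-the-shelf matroid result available to replace Seymour's regular-matroid decomposition, is reasonable and consistent with the paper's decision to leave the statement open. Just be aware that the paper provides no evidence for the specific shape of the ``atoms'' you anticipate (depth-two almost-bipartite, bounded-odd-face planar or projective-planar, plus sporadics); that is your own speculation, and any genuine attempt would have to discover the right list rather than assume it.
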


\bibliographystyle{plainnat}
\bibliography{../../biblio}

\end{document}